\newtheorem{theorem}{Theorem}[section]
\newtheorem*{theorem*}{Theorem}
\newtheorem{proposition}[theorem]{Proposition}
\newtheorem{lemma}[theorem]{Lemma}
\newtheorem{corollary}[theorem]{Corollary}
\newtheorem{definition}[theorem]{Definition}
\newtheorem{remark}[theorem]{Remark}
\newtheorem*{lemma*}{Lemma}
\newtheorem*{remark*}{Remark}
\newtheorem*{example*}{Example}
\newenvironment{customthm}[1]
  {\innercustomthm}
  {\endinnercustomthm}
\newcommand{\spec}{\operatorname{Spec}}  
\DeclareMathAlphabet{\mathpzc}{OT1}{pzc}{m}{it}
\def\blfootnote{\xdef\@thefnmark{}\@footnotetext}
\title[Motivic Hilbert Zeta]{Motivic Hilbert zeta functions of curves
 are rational}
\author[ Bejleri--Ranganathan--Vakil]{{\larger D}{\smaller ori}\  {\larger B}{\smaller ejleri} \ \ \ {\larger D}{\smaller hruv}\  {\larger R}{\smaller anganathan} \ \ \ {\larger R}{\smaller avi}\  {\larger V}{\smaller akil}}
\address{Dori Bejleri, Department of Mathematics\\
Brown University\\ Providence RI 02913 USA}
\email{\href{mailto:dbejleri@math.brown.edu}{dbejleri@math.brown.edu}}
\address{Dhruv Ranganathan, Department of Mathematics\\
Massachusetts Institute of Technology\\
Cambridge MA 02138 USA}
\email{\href{mailto:dhruvr@mit.edu}{dhruvr@mit.edu}}
\address{Ravi Vakil, Department of Mathematics\\
Stanford University\\ Stanford CA 94305 USA}
\email{\href{mailto:vakil@math.stanford.edu}{vakil@math.stanford.edu}}
\date{\today}
\newcommand{\mb}[1]{\mathbb{#1}}
\newcommand{\mf}[1]{\MakeUppercase{#1}}
\newcommand{\mc}[1]{\mathcal{#1}}
\newcommand{\Spec}{\text{Spec}}
\newcommand{\Hilb}{\operatorname{Hilb}}
\newcommand{\Sym}{\operatorname{Sym}}
\newcommand{\supp}{\operatorname{supp}}
\newcommand{\F}[1]{F^{\underline{#1}}}
\newcommand{\tF}[1]{\tilde{F}^{\underline{#1}}}
\def\ZZ{{\mathbb Z}}
\def\CC{{\mathbb C}}
\begin{document}

\begin{abstract}
The motivic Hilbert zeta function of a variety $X$ is the generating
function for classes in the Grothendieck ring of varieties of Hilbert
schemes of points on $X$. In this paper, the motivic Hilbert zeta
function of a reduced curve is shown to be rational. 
\end{abstract}
\maketitle

\section{Introduction} 

Let $k$ be an algebraically closed field and $K_0(\mathrm{Var}_k)$ be the Grothendieck ring
of varieties over $k$. Suppose $X$ is a variety over $k$. The
\textit{motivic zeta function of $X$} is defined as the power series
\[
Z^{\mathrm{mot}}_X(t) := \sum_{d\geq 0} [\mathrm{Sym}^d(X)] t^d \ \in \ K_0(\mathrm{Var}_k)\llbracket t\rrbracket
\]
(where $[\Sym^0(X)] = 1$ by convention). For the remainder of this paper, $X$ will be a curve. In~\cite{kapranov}, Kapranov observed that if $X$ is a
{\em smooth} curve, $Z_X^{\mathrm{mot}}(t)$ 
 is a \textit{rational} function in $t$. The zeta function is a rich invariant of $X$, and when $X$ is defined over a finite field, specializes to the Weil zeta function via the point-counting measure. When the curve $X$ is singular, one still expects $Z^{\mathrm{mot}}_X(t)$ to be a rational function in $t$, see for instance~\cite[Corollary 30]{litt} for strong results in this direction. However, $Z_X^{\mathrm{mot}}(t)$ is not sensitive to the singularities of $X$. For example when $X$ is a cuspidal cubic, $Z_X^{\mathrm{mot}}(t) = Z_{\mb{P}^1}^{\mathrm{mot}}(t)$. 

An invariant that is more sensitive to the singularities of $X$ is the \textit{motivic Hilbert zeta function}:
\[
Z_X^{\mathrm{Hilb}}(t):= \sum_{d\geq 0} [\mathrm{Hilb}^d(X)]t^d \ \in \ 1 + tK_0(\mathrm{Var}_k)\llbracket t\rrbracket.
\]
Here, $\mathrm{Hilb}^d(X)$ is the Hilbert scheme parametrizing length
$d$ subschemes of $X$. When $X$ is smooth, $\mathrm{Hilb}^d(X)$
coincides with $\mathrm{Sym}^d(X)$, so the Hilbert and usual motivic
zeta functions coincide. However, when $X$ is singular,
$\mathrm{Hilb}^d(X)$ contains information about subschemes supported
on the singularities. For instance, if $X = \mathrm{Spec}(k[ x,y
]/(xy))$ is a nodal curve with singular point $p \in X$, one can check
that $\mathrm{Hilb}_p^2(X) \cong \mb{P}^1$ where $\mathrm{Hilb}^d_p(X)
\subset \mathrm{Hilb}^d(X)$ is the locus parametrizing subschemes
supported on the singularity (see Section \ref{sec:local}). More
generally, if $p \in X$ is a singular point, $\mathrm{Hilb}^2_p(X)
\cong \mb{P}(T_pX)$ is the projectivization of the Zariski tangent
space of the singularity.  In particular, even $\mathrm{Hilb}^2(X)$
can be of arbitrarily large dimension.  The main result of this paper is the following.

\begin{customthm}{\!\!}
Let $X$ be a reduced curve over an algebraically closed
field $k$. Then $Z^{\mathrm{Hilb}}_X(t)$ is a rational function of $t$, with constant term $1$. 
\end{customthm}

As before, if $X$ is defined over a finite field, by passing to point counting over $\mathbb F_q$ we obtain a generalization of the rationality part of the Weil conjectures
for curves, to the case of Hilbert zeta functions. We note that even after passing to the Euler characteristic specialization, the result appears to be new. The main theorem
was known in special cases from work of others. When the singularities
of $X$ are planar, the result is implicit in work of Maulik and
Yun~\cite{MY14} and for Gorenstein curves when $k = \CC$, it is proved
by Migliorini and Shende~\cite[Proposition 16]{MS11}. For curves that
have unibranch singularities, one can deduce rationality from a result of Pfister and Steenbrink~\cite{ps} that the punctual Hilbert schemes of a unibranch singularity become isomorphic as the number of points goes to infinity. We establish a similar stabilization for multibranch singularities. 

When $X$ is planar, the Hilbert zeta function is closely related to
knot  invariants, \cite{OS12}. One might hope that for non-planar curves $X$, the Hilbert zeta function is still related to such structures, for example the refined invariants defined by Aganagic and Shakirov~\cite{AS11}.   Furthermore, it is natural to hope for a more conceptual or geometric explanation for the rationality proved here.

Our approach to the proof of the main result is as follows. Given a
singular point, we stratify the Hilbert scheme based on the lengths of
the pullbacks of the universal subscheme to branches of the
normalization. We then show that these lengths vary in a controlled
manner, based only on the singularity. We use this to show that each
of these strata stabilize, for large enough degree, inside an
appropriate Grassmannian, mimicking the construction of the Hilbert
scheme of points (see for instance~\cite[Part 3]{FGAx}). The stabilization is captured by the statement of Corollary~\ref{cor:dim}. In the last section we illustrate in an example how these methods may be used to compute the Hilbert zeta function explicitly. 

We work over an algebraically closed field for simplicity.   When $k$
is not algebraically closed, $X$ may fail to have a rational point and
the standard argument for rationality~\cite{kapranov} is not
sufficient, even when $X$ is smooth. Nonetheless, even without a
rational point, Litt \cite{litt}  has shown  that the motivic zeta function is still
rational. The arguments in the present paper generalize without substantial changes when the singular points of $X$ are $k$-rational. When the singularities are not $k$-rational, the methods here may be adapted by applying the Cohen structure theorem at the singularities~\cite[\href{http://stacks.math.columbia.edu/tag/0323}{Tag 0323}]{stacks-project}, with some additional careful bookkeeping. We leave these adaptations to the reader. 

\subsection*{Future prospects}
The class of the Hilbert scheme of a smooth
surface is studied in~\cite{dCL02,G01}, and a beautiful quasimodular
formula for the Betti numbers was obtained by G\"ottsche
in~\cite{G90}. A study of the Hilbert zeta function for singular surfaces is a
natural next step in view of the results here, and first steps in this direction are taken in~\cite{GNS15}. A more concrete goal
would be to establish rationality for generically non-reduced curves,
where our explicit methods do not seem to generalize in a
straightforward way. Since this paper first appeared on the ar$\chi$iv, the second author has used the approach in this paper to show that the Euler characteristic Hilbert zeta function is constructible in families of curves~\cite{Bej18}. 

\subsection*{Acknowledgments} This project began at 2015 Arizona Winter School on the \textit{Arithmetic of Higher Dimensional Algebraic Varieties}. We thank the organizers and our project group for creating an exciting mathematical environment. Important progress was made when D.B. visited Yale University in Fall 2015. We are especially grateful to Dan Abramovich for encouragement along the way, as well as for remarks on a previous draft. The paper has also benefited from conversations with Shamil Asgarli, Daniel Litt, Davesh Maulik, and Luca Migliorini. D.R. was a Ph.D. student at Yale University when this project started, and warmly acknowledges friends and colleagues there for their support. The final text benefited from the thoughtful comments of an anonymous referee. \\

\subsection*{Funding} D.B. was supported by NSF grant DMS-1162367 (PI: Dan Abramovich), D.R. was supported by NSF grants CAREER DMS-1149054 (PI: Sam Payne) and DMS-128155 (Institute for Advanced Study), and R.V.  was supported by NSF grants DMS-1500334 and DMS-1100771.

\section{Hilbert schemes of points}

For $X$ a quasiprojective variety, the Hilbert scheme $\Hilb^d(X)$ is the moduli space for flat families of length $d$ subschemes of $X$. Using the identification between length $d$ subschemes $Z \subset X$ and ideal sheaves $\mathcal J$ with $\mathrm{colength}(\mathcal J) := \mathrm{length}(\mathcal O_X/\mathcal J) = d$, we will often represent the closed points of $\Hilb^d(X)$ by the corresponding ideals. 

There is a well defined Hilbert-Chow morphism (see, for example,
\cite[Chapter 7]{FGAx})
$$
h : \Hilb^d(X) \to \Sym^d(X)
$$
sending a subscheme to its support:
$$
[\mathcal J] \mapsto \sum_{p \in \mathrm{Supp}(\mathcal O_X/\mathcal J)} \mathrm{length}(\mathcal O_{X,p}/\mathcal J_p) [p].
$$
When $X$ is a smooth curve, $h$ is an isomorphism. 

\subsection{Reduction to a local calculation}\label{sec:local} We first reduce the proof of our main theorem to a local calculation at the singularities of the curve. Let $Y \subset X$ be a closed $k$-subvariety. Then $\Sym^d(Y) \subset \Sym^d(X)$ is a closed subvariety and we define $\Hilb^d(X,Y)$ the \emph{Hilbert scheme with support in $Y$} as the scheme theoretic preimage $h^{-1}(\Sym^d(Y))$ by the Hilbert-Chow morphism $h : \Hilb^d(X) \to \Sym^d(X)$. Set theoretically, $\Hilb^d(X,Y) \subset \Hilb^d(X)$ consists of length $d$ subschemes $Z \subset X$ with support $\supp(\mathcal O_Z)$ contained in $Y$. 

We define the \emph{motivic Hilbert zeta function with support in} $Y$ as:

\[
Z_{Y\subset X}^{\Hilb}(t) := \sum_{d \geq 0} [\Hilb^d(X,Y)]t^d \in 1 + tK_0(\mathrm{Var})\llbracket t \rrbracket
\]

The Hilbert zeta function respects the scissor relations on $X$ in the following sense.

\begin{lemma} Let $Y \subset X$ a closed subset with open complement $U \subset X$. Then
$$
Z_{X}^{\Hilb}(t) = Z_{U}^{\Hilb}(t) \cdot Z_{Y\subset X}^{\Hilb}(t)
$$
\end{lemma}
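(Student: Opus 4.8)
The plan is to stratify both sides of the Hilbert--Chow morphism according to how the support of a finite subscheme is distributed between the open piece $U$ and the closed piece $Y$, and then to recognize the resulting identity of classes as a Cauchy product of power series. The factorization of the lemma is then just the statement that the generating function of a convolution is a product.

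First I would stratify the symmetric product. Since $X = U \sqcup Y$ as a set, an effective zero-cycle of degree $d$ on $X$ decomposes uniquely as a degree $e$ cycle supported on $U$ together with a degree $d-e$ cycle supported on $Y$, giving a decomposition into locally closed subsets
\[
\Sym^d(X) = \bigsqcup_{e=0}^{d} \Sym^e(U) \times \Sym^{d-e}(Y).
\]
Pulling this back along $h \colon \Hilb^d(X) \to \Sym^d(X)$ stratifies $\Hilb^d(X)$ into locally closed pieces $h^{-1}(\Sym^e(U) \times \Sym^{d-e}(Y))$, namely the loci of subschemes carrying length $e$ on $U$ and length $d-e$ on $Y$. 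By construction the top stratum $e=0$ recovers $\Hilb^d(X,Y)$.

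The heart of the argument is to identify each stratum with a product of Hilbert schemes,
\[
h^{-1}\bigl(\Sym^e(U) \times \Sym^{d-e}(Y)\bigr) \;\cong\; \Hilb^e(U) \times \Hilb^{d-e}(X,Y).
\]
A length $d$ subscheme $Z$ in this stratum is zero-dimensional, hence is the disjoint union of its local pieces over the finitely many closed points of its support; grouping the points lying in $U$ against those lying in $Y$ splits $Z$ uniquely as $Z_U \sqcup Z_Y$. Because $U$ is open in $X$, the piece $Z_U$ is exactly a length $e$ subscheme of $U$, while $Z_Y$ is a length $d-e$ subscheme of $X$ supported in $Y$; conversely, two subschemes with disjoint supports, one in $U$ and one in $Y$, glue to a single length $d$ subscheme of $X$. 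I expect the one genuinely technical point to be promoting this pointwise bijection to an isomorphism of schemes, i.e.\ showing that the universal subscheme over the stratum splits in families. This follows because over the stratum the intersections of the universal subscheme with $U \times (-)$ and with $Y \times (-)$ have disjoint supports, so each is open and closed in the universal family and finite flat over the base of the expected degree, yielding the two classifying maps whose product is inverse to the gluing. For the lemma, however, one needs only the equality of classes in $K_0(\mathrm{Var})$, so a stratumwise piecewise isomorphism suffices and the full scheme-theoretic statement can be bypassed.

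Finally, applying the scissor relations to this stratification and using the product identification gives
\[
[\Hilb^d(X)] = \sum_{e=0}^{d} [\Hilb^e(U)] \cdot [\Hilb^{d-e}(X,Y)]
\]
in $K_0(\mathrm{Var})$. Multiplying by $t^d$ and summing over all $d \geq 0$, the right-hand side is exactly the coefficient-wise Cauchy product of $Z_U^{\Hilb}(t)$ and $Z_{Y \subset X}^{\Hilb}(t)$, which yields the claimed factorization $Z_X^{\Hilb}(t) = Z_U^{\Hilb}(t) \cdot Z_{Y \subset X}^{\Hilb}(t)$.
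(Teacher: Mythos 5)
Your proposal is correct and follows essentially the same route as the paper: stratify $\Hilb^d(X)$ by the length of the part of the subscheme supported on $Y$, identify each stratum with $\Hilb^e(U)\times\Hilb^{d-e}(X,Y)$, and read off the Cauchy product. The paper simply asserts this stratification in one line, so your additional justification (splitting a zero-dimensional subscheme along its support and checking the identification in families) is a more detailed account of the same argument.
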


\begin{proof}  Stratify $\Hilb^d(X)$ into locally closed subsets
\[
\Hilb^d(X) = \bigsqcup_{i + j = d} \Hilb^i(U) \times \Hilb^j(X,Y).
\]
Here $\Hilb^i(U) \times \Hilb^j(X,Y)$ is the stratum consisting of subschemes of $X$ of length $d$, such that the length of the subscheme supported on $Y$ is exactly $j$. This implies that
\[
[\Hilb^d(X)] = \sum_{i + j = d} [\Hilb^i(U)]\cdot[\Hilb^j(X,Y)]
\]
in $K_0(\mathrm{Var})$ and the result follows. 
\end{proof}

\begin{corollary} 
Let $X$ be a reduced curve over $k$ with possibly singular points $p_1, \ldots, p_l$. Then
\[
Z_{X}^{\Hilb}(t) = Z_{X^{sm}}^{\Hilb}(t) \prod_{i = 1}^l Z_{p_i \subset X}^{\Hilb}(t).
\]
where $X^{sm}$ denotes the smooth locus of $X$. 
\end{corollary}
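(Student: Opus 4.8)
The plan is to deduce this from the preceding Lemma together with the locality of the punctual contribution at each singular point. First I would take $Y=\{p_1,\dots,p_l\}$, the (finite, hence closed) singular locus of $X$, with open complement $U=X^{sm}$. The Lemma then immediately gives
\[
Z_X^{\Hilb}(t)=Z_{X^{sm}}^{\Hilb}(t)\cdot Z_{Y\subset X}^{\Hilb}(t),
\]
so everything reduces to proving the multiplicativity
\[
Z_{Y\subset X}^{\Hilb}(t)=\prod_{i=1}^l Z_{p_i\subset X}^{\Hilb}(t)
\]
over the individual singular points.

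For this I would decompose subschemes supported on $Y$ according to their supports. Since $X$ is quasiprojective and the $p_i$ are distinct closed points, I can choose pairwise disjoint open neighborhoods $p_i\in V_i\subset X$ and set $V=\bigsqcup_i V_i$. Any length $d$ subscheme $Z$ with $\supp(\OO_Z)\subseteq Y$ is then a closed subscheme of $V$ and splits canonically as $Z=\bigsqcup_i Z_i$ with $Z_i$ supported at $p_i$ and $\sum_i \mathrm{length}(\OO_{Z_i})=d$. Because restriction to the open pieces $V_i$ commutes with base change, this splitting holds in flat families and upgrades to an isomorphism of schemes
\[
\Hilb^d(X,Y)\ \cong\ \bigsqcup_{d_1+\cdots+d_l=d}\ \prod_{i=1}^l \Hilb^{d_i}(X,\{p_i\}),
\]
where I have used $\Hilb^{d_i}(V_i,\{p_i\})=\Hilb^{d_i}(X,\{p_i\})$ since subschemes supported at $p_i$ see only the open neighborhood $V_i$ (indeed only the local ring $\OO_{X,p_i}$). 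Passing to classes in $K_0(\mathrm{Var})$ and summing against $t^d$ then turns this disjoint-union-of-products into the asserted product of generating series.

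The only substance beyond the Lemma is this locality, and the one point I expect to require care is promoting the set-theoretic decomposition by support to an isomorphism of Hilbert schemes valid in families, rather than a bare identity of point sets; this is routine given the standard identification of the Hilbert functor of a disjoint union $V=\bigsqcup_i V_i$ with the product of the $\Hilb^{d_i}(V_i)$ stratified by partitions $d_1+\cdots+d_l=d$. As an alternative that sidesteps the explicit neighborhoods, one could peel the points off one at a time: with $U_j=X\setminus\{p_1,\dots,p_j\}$ the Lemma applied inside the quasiprojective $U_j$ yields $Z_{U_j}^{\Hilb}=Z_{U_{j+1}}^{\Hilb}\cdot Z_{p_{j+1}\subset U_j}^{\Hilb}$, and since $Z_{p_{j+1}\subset U_j}^{\Hilb}=Z_{p_{j+1}\subset X}^{\Hilb}$ by the same locality, telescoping from $j=0$ to $j=l$ recovers the formula.
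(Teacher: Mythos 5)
Your proof is correct, and the telescoping argument you give as an alternative is precisely the intended one: the paper offers no explicit proof of this corollary, treating it as immediate from iterating the preceding Lemma over the singular points (using that $Z_{p\subset U}^{\Hilb}=Z_{p\subset X}^{\Hilb}$ since the punctual Hilbert scheme depends only on the local ring at $p$). Your first, more explicit route via the clopen decomposition of $\Hilb^d(X,Y)$ by supports is also valid and fills in the same content with a bit more care than the paper demands.
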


Upon applying Kapranov's theorem~\cite[Theorem 1.1.9]{kapranov}  in conjunction with the identification of $\Hilb^d(X)$ with $\mathrm{Sym}^d(X)$ for $X$ a smooth curve, we see that $Z_{X^{sm}}^{\Hilb}(t)$ is a rational function. Thus, the proof of the main theorem will follow from the following result:

\begin{theorem}\label{thm:1} Let $(X,0)$ be a reduced
  curve with singular point $0 \in X$. Then $Z_{0 \subset X}^{\Hilb}(t)$ is a rational function in $t$ with denominator $(1 - t)^s$ where $s$ is the number of branches of the singularity $(X,0)$.
\end{theorem}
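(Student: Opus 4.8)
The plan is to prove rationality of the local Hilbert zeta function $Z_{0 \subset X}^{\mathrm{Hilb}}(t)$ by passing to the normalization and tracking how the length of a subscheme distributes among the branches of the singularity.

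Let me think about the structure. We have a reduced curve $(X,0)$ with a singular point at $0$. Let $s$ be the number of branches. The normalization $\nu: \tilde X \to X$ separates the branches, so $\tilde X$ is a disjoint union of $s$ smooth curve germs, each a copy of $\mathrm{Spec}\, k[[u]]$ (or the analytic/completion version). The key point: the local ring $\mathcal O = \mathcal O_{X,0}$ sits inside its normalization $\tilde{\mathcal O} = \prod_{i=1}^s k[[t_i]]$ with finite colength $\delta$ (the delta invariant). Subschemes supported at $0$ correspond to ideals $J \subset \mathcal O$ of finite colength.

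So the approach. First, I'd set up the local picture: $Z_{0\subset X}^{\mathrm{Hilb}}(t) = \sum_d [\mathrm{Hilb}_0^d(X)]t^d$ where $\mathrm{Hilb}_0^d(X)$ parametrizes colength-$d$ ideals $J \subset \mathcal O$. The conductor $\mathfrak{c} = \{x \in \tilde{\mathcal O} : x\tilde{\mathcal O} \subset \mathcal O\}$ is the largest ideal of $\tilde{\mathcal O}$ contained in $\mathcal O$; it has the form $\mathfrak{c} = \prod_i (t_i^{c_i})$ for some integers $c_i$.

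Now the main structural idea, matching the introduction's description: given an ideal $J \subset \mathcal O$, I want to stratify by the multidegree $\underline{n} = (n_1, \dots, n_s)$ recording the orders of vanishing of $J$ along each branch — that is, $n_i = \mathrm{length}$ of the pullback to the $i$-th branch, equivalently the valuation data $J\tilde{\mathcal O} = \prod_i (t_i^{n_i})$. Once a branch-multidegree is fixed beyond the conductor (i.e., once each $n_i \geq c_i$), the containment structure stabilizes: the ideal $J$ contains a fixed "tail" and is determined by its image in a fixed finite-dimensional quotient $\mathcal O/\mathfrak{c}$-module. This is where Corollary~\ref{cor:dim} (the stabilization statement) should be invoked: for large multidegree, the stratum $\{J : J\tilde{\mathcal O} = \prod_i(t_i^{n_i})\}$ becomes, as a variety, independent of $\underline n$ (or varies only by a predictable $\mathbb A^{?}$-bundle structure) as we increment any $n_i$ past the conductor.

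The computation then organizes as follows. Writing $\mathrm{Hilb}_0^d(X) = \bigsqcup_{\underline n} H_{\underline n}$ stratified by branch-multidegree $\underline n$ with $\sum_i n_i$ determining the contribution to the colength (up to the constant $\delta$ correction coming from $\mathcal O \hookrightarrow \tilde{\mathcal O}$), I'd get
\[
Z_{0\subset X}^{\mathrm{Hilb}}(t) = \sum_{\underline n \geq 0} [H_{\underline n}] \, t^{\mathrm{colength}(J_{\underline n})}.
\]
Splitting the sum into the "bounded" region where some $n_i < c_i$ (finitely many strata, contributing a polynomial) and the "stable" region where all $n_i \geq c_i$, the stable part becomes a sum over $\underline n \geq \underline c$ of a fixed class times $t^{\sum n_i}$ (shifted appropriately). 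Each geometric series $\sum_{n_i \geq c_i} t^{n_i}$ contributes a factor $t^{c_i}/(1-t)$, and multiplying the $s$ branches together produces exactly the denominator $(1-t)^s$. The bounded region adds only a polynomial in $t$, so the total is rational with denominator $(1-t)^s$, as claimed.

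The hard part — and the crux of the whole argument — will be establishing the stabilization precisely: showing that for $\underline n$ sufficiently large (all $n_i$ past the conductor), the stratum $H_{\underline n}$ is isomorphic as a variety to a fixed space independent of $\underline n$, and that incrementing a single $n_i$ shifts the colength by exactly $1$ while leaving the class $[H_{\underline n}]$ unchanged. The subtlety is that the ideals $J$ are ideals in $\mathcal O$, not in $\tilde{\mathcal O}$, so I cannot simply treat the branches independently: the condition that a tuple of branch-data glue to an honest $\mathcal O$-ideal is a nontrivial linear condition governed by $\mathcal O/\mathfrak c \subset \tilde{\mathcal O}/\mathfrak c$. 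Making the stabilization uniform across all branches simultaneously — in particular checking that the relevant parameter space embeds in a fixed Grassmannian (as the introduction indicates, mimicking the Grothendieck construction of $\mathrm{Hilb}$) and that the strata eventually become constant there — is exactly the content I expect to require the most care, and is presumably what Corollary~\ref{cor:dim} is set up to deliver.
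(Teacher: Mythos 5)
Your outline is essentially the paper's own argument: stratify $\Hilb^d(X,0)$ by the branch-length vector of the pullback to the normalization, bound $d-\sum a_i$ between $-\delta$ and $C-\delta$ so each stratum sees only finitely many degrees, prove stabilization of the strata (via an embedding into a fixed Grassmannian of $R$-submodules) once each $a_i$ passes the conductor $c_i$, and sum the resulting geometric series to get denominator $(1-t)^s$. The one imprecision to fix is that the colength $d$ is \emph{not} a function of $\underline{n}$ alone (it ranges over a window of width at most $C$ for each fixed branch-length vector), so the stabilization must be stated, as in the paper, as an equality $[\Hilb^{d+1,\ldots,a_i+1,\ldots}]=[\Hilb^{d,\ldots,a_i,\ldots}]$ incrementing $d$ and $a_i$ simultaneously before the sum over $\underline{n}$ telescopes into $(1-t)^{-s}$ times a polynomial.
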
 

We will refer to the pair $(X,0)$ as a curve singularity and
$\Hilb^d(X,0)$ as the punctual Hilbert scheme of $(X,0)$. Note that $\Hilb^d(X,0)$ depends only on the completed local ring $R = \widehat{\mathcal O_{X,0}}$. In fact there is a natural identification of the punctual Hilbert scheme
$$
\Hilb^d(X,0) = \{[\mathcal J]\ | \ \mathcal J \subset R, \ \mathrm{colength}(\mathcal J) = d\}
$$
as a parameter space for colength $d$ ideals in $R$.
\subsection{A stratification of the punctual Hilbert scheme}

Let $R$ be a reduced complete local ring of dimension $1$ over $k$ with residue field $k$, i.e., the completed local ring of the germ of a $k$-rational curve singularity. Let $\tilde R$ denote its normalization. If $X = \spec R$ is an $s$-branched curve singularity, then we have an isomorphism
\[
\tilde X:=\spec \tilde R \cong \spec\left(k\llbracket x_1\rrbracket \times \cdots \times k\llbracket x_s\rrbracket \right).
\]
  Let $B_i := \Spec k\llbracket x_i\rrbracket$ be the $i^{\mathrm{th}}$ branch of $\tilde X$ and $\varphi_i:B_i\to \spec(R)$ be the normalization map restricted to this branch. 

Let $\Hilb^d(X,0)$ denote the punctual Hilbert scheme of points on $X$. Let $\underline{\bm{a}} = (a_1,\ldots, a_s)\in \mathbb{N}^s$ be a vector of non-negative integers. For a length $d$ subscheme defined by an ideal $\mathcal I$, let $[\mathcal I]$ denote the corresponding point in $\Hilb^d(X,0)$, over which the universal subscheme is $Z_\mathcal{I} := \spec(R/\mathcal I)$. Define the subset $\Hilb^{d,\underline{\bm{a}}}(X,0)$ to be the locus
\[
\Hilb^{d,\underline{\bm{a}}}(X,0) := \{[\mathcal{I}]\in \Hilb^d(X,0): \textrm{for all $i$,  } \mathrm{length}(\varphi_i^\star(Z_{\mathcal{I}})) = a_i\}. 
\]

\begin{definition}\label{def:branch-length}
Given a subscheme $Z_{\mathcal I}$ in the notation above, the vector $\underline{\bm{a}}$ will be referred to as the \textit{branch-length vector} of the subscheme.
\end{definition}

The branch-length vector stratifies the punctual Hilbert scheme in the following sense.

\begin{proposition}\label{strata}
Let $d$ and $\underline{\bm{a}}$ be as above. The subset
$\Hilb^{d,\underline{\bm{a}}}(X,0)$ is a locally closed subscheme of
$\Hilb^d(X,0)$ (possibly empty).
\end{proposition}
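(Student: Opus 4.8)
The plan is to realize each branch-length function as the fiber-dimension function of a coherent sheaf on $\Hilb^d(X,0)$, and then to invoke semicontinuity together with Fitting ideals to pin down the locally closed scheme structure. First I would reduce to a finite-type situation. Every colength $d$ ideal $\mathcal I \subset R$ contains $\mathfrak{m}^d$ (a module of length $d$ over a local ring is annihilated by the $d$-th power of the maximal ideal, as one sees from a composition series), so $\Hilb^d(X,0) = \Hilb^d(\Spec R/\mathfrak{m}^d)$ is a projective scheme, and it carries a universal subscheme $\mathcal Z \subset \Spec(R/\mathfrak{m}^d) \times H$ which is finite and flat of degree $d$ over $H := \Hilb^d(X,0)$.

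Next, for each branch $i$ I would pull back the universal family along the normalization map. Writing $\psi_i = \varphi_i \times \mathrm{id}_H : B_i \times H \to \Spec R \times H$, form the scheme-theoretic preimage $\psi_i^{-1}(\mathcal Z) \subset B_i \times H$; over a point $[\mathcal I] \in H$ its fiber is exactly $\varphi_i^\star(Z_{\mathcal I})$. Since $\mathcal Z$ is supported over the closed point $0$ and $\varphi_i$ carries the closed point of $B_i$ to $0$, the preimage $\psi_i^{-1}(\mathcal Z)$ is supported on $V(x_i) \times H$ and is finite over $H$. Its pushforward $\mathcal G_i := p_{H,\ast} \mathcal O_{\psi_i^{-1}(\mathcal Z)}$ is therefore a coherent sheaf on $H$. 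Because a finite morphism is affine, base change is automatic and the fiber of $\mathcal G_i$ computes the global sections of the fiber scheme, so that $\dim_k \bigl(\mathcal G_i \otimes k([\mathcal I])\bigr) = \mathrm{length}\,\varphi_i^\star(Z_{\mathcal I})$. In other words, the branch-length function $\ell_i : [\mathcal I] \mapsto \mathrm{length}\,\varphi_i^\star(Z_{\mathcal I})$ is precisely the fiber-dimension function of $\mathcal G_i$.

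With this identification, semicontinuity of fiber dimension shows that $\{\ell_i \geq n\}$ is closed for every $n$, so each $\{\ell_i = a_i\}$ is locally closed. To obtain the scheme structure I would use the Fitting ideals of $\mathcal G_i$: the closed subscheme $V(\mathrm{Fitt}_{a_i-1}\,\mathcal G_i)$ is $\{\ell_i \geq a_i\}$, and excising the closed subscheme $V(\mathrm{Fitt}_{a_i}\,\mathcal G_i) = \{\ell_i \geq a_i + 1\}$ realizes $\{\ell_i = a_i\}$ as a locally closed subscheme of $H$. The desired stratum is then the scheme-theoretic intersection
\[
\Hilb^{d,\underline{\bm a}}(X,0) = \bigcap_{i=1}^s \{\ell_i = a_i\},
\]
a finite intersection of locally closed subschemes, hence locally closed.

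The main obstacle I anticipate is the bookkeeping around the pullback family $\psi_i^{-1}(\mathcal Z)$: one must verify that it is genuinely finite over $H$ (so that $\mathcal G_i$ is coherent and its fibers really compute $\varphi_i^\star(Z_{\mathcal I})$ rather than a spurious thickening), and one must be careful that it is the flatness of $\mathcal Z$ over $H$ — and not of $\psi_i^{-1}(\mathcal Z)$, whose fiber length jumps exactly along these strata — that makes the pushforward well behaved. Both points follow from the punctual support of $\mathcal Z$ and the affineness of finite morphisms, but they are the places where care is needed.
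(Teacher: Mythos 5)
Your proposal is correct and follows essentially the same route as the paper: pull back the universal subscheme along each branch of the normalization, observe that the resulting family is finite over $\Hilb^d(X,0)$, and apply upper semicontinuity of the fiber length to stratify by constant degree. The only difference is cosmetic — you supply the scheme structure via Fitting ideals, whereas the paper works set-theoretically in the proof and defers the scheme structure to the flattening stratification in the subsequent remark.
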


\begin{proof} It suffices to prove that for any single branch $\varphi : B \to X$, 
$$
\Hilb^{d,e}(X,0) := \{[\mathcal{I}] \in \Hilb^d(X,0): \mathrm{length}(\varphi^*(Z_{\mathcal{I}})) = e\}
$$
\noindent is locally closed. Consider the universal flat family
$$
\begin{tikzcd}
\mathcal{Z} \arrow[hook]{r} \arrow{d} & X \times \Hilb^d(X,0) \\
\Hilb^d(X,0) & 
\end{tikzcd}
$$
\noindent of closed subschemes of $X$ over $\Hilb^d(X,0)$. Pulling back this diagram along \
\[
(\varphi,\mathrm{id}) : B \times \Hilb^d(X,0) \to X \times \Hilb^d(X,0)
\]
gives us a diagram 
$$
\begin{tikzcd}
\varphi^*\mathcal{Z} \arrow[hook]{r} \arrow{d}{\pi} & B \times \Hilb^d(X,0) \\
\Hilb^d(X,0) & 
\end{tikzcd}.
$$
\noindent of closed subschemes of $B$ over $\Hilb^d(X,0)$. 

The morphism $\pi$ is finite, so the function $[\mathcal{I}] \mapsto \mathrm{length}(\pi^{-1}[\mathcal{I}])$ is upper semicontinuous~\cite[Theorem III.2.8]{hartshorne}. Thus we can stratify $\Hilb^d(X,0)$ into a disjoint union of locally closed $S_e \subset \Hilb^d(X,0)$ over which $\pi$ is finite of constant degree $e$. It remains to check that $S_e = \Hilb^{d,e}(X,0)$. Indeed for any $[\mathcal{I}] \in \Hilb^d(X,0)$, $\pi^{-1}[\mathcal{I}] = \mathrm{Spec}(R/\mathcal{I} \otimes_R B) = \varphi^*(\mathrm{Spec}(R/\mathcal{I}))$, so that $\Hilb^{d,e}(X,0)$ is precisely the locus over which $\pi$ has constant degree $e$.

\end{proof}

\subsection*{Remark} The stratification in the proof above is in fact the set-theoretic version of the  flattening stratification for $\pi$. Observe that any finite morphism of constant degree is flat, so the restriction of $\pi$ over each $S_e$ is flat. On the other hand, finite flat morphisms have constant degree. In particular, it follows from the universal property of the flattening stratification that $\Hilb^{d,\underline{\bm{a}}}(X,0)$ is a moduli space for length $d$ subschemes $Z \subset X$ with $\mathrm{length}(\varphi_i^*(Z)) = a_i$. See~\cite[Tag 052F]{stacks-project} for details on this stratification.

\section{The geometry of singular curves}

Consider a reduced curve singularity $X$ with $s$ branches. Let $\tilde X\to X$ be the normalization, opposite to the finite extension 
\[
R\hookrightarrow \tilde R \cong \prod_{i = 1}^s k\llbracket x_i \rrbracket
\]
of rings. We will identify $R$ with a subring of $\tilde R$, and write $R_i$ for the coordinate ring of the branch $i^{th}$ branch $X_i$. In other words, $R_i \subset k\llbracket x_i \rrbracket$ is a finite ring extension corresponding to the $i^{th}$ branch $\varphi_i : B_i \to X_i \subset X$ of the normalization.

\begin{enumerate}[(1)]
\item Let 
\[
\delta := \mathrm{dim}_k \prod_{i = 1}^s k\llbracket x_i \rrbracket/R
\]
be the \textit{$\delta$-invariant} of $X$. Similarly, we denote
by $\delta_i$ the $\delta$-invariant $\dim_k k\llbracket x_i \rrbracket/R_i$ of the $i^{th}$ branch.

\item Let 
\[
\mathfrak{c} := \mathrm{Ann}_{R}(\tilde{R}/R)
\]
be the \textit{conductor ideal}. This an ideal of both $\tilde{R}$ and $R$. In particular $\mathfrak{c}$ is generated by monomials, say $x_i^{c_i}$, as an ideal of $R$. It's clear from the definition that $c_i$ is the smallest positive integer such that for all $n \geq c_i$, $x_i^n \in R$. We will refer to $c_i$ as the conductor of the $i^{th}$ branch, denote by
$$
C := \dim \tilde{R}/\mathfrak{c} = \sum_{i = 1}^s c_i
$$
the conductor of $X$, and by $\underline{c} = (c_1, \ldots, c_s)$ the conductor branch-length vector.

\end{enumerate} 

We will need the following result of Schwede:

\begin{proposition}\label{prop:pushout}(Schwede \cite{schwede}) Let $R \subset \tilde{R}$ be the normalization of a reduced ring and let $\mathfrak{c}$ be the conductor ideal. Then 
$$
\begin{tikzcd}
\spec \tilde{R}/\mathfrak{c} \arrow{d} \arrow[hook]{r} & \spec \tilde{R} \arrow{d} \\ \spec R/\mathfrak{c} \arrow[hook]{r}  & \spec R
\end{tikzcd}
$$
is a pushout diagram of affine schemes. 
\end{proposition}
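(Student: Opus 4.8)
The plan is to dualize the problem. Since $\spec$ is a contravariant equivalence between affine schemes and commutative rings, turning pushouts into pullbacks, the asserted pushout square of affine schemes is a pushout \emph{in the category of affine schemes} exactly when the corresponding square of rings
\[
\begin{tikzcd}
R \arrow{r} \arrow{d} & \tilde R \arrow{d} \\ R/\mathfrak{c} \arrow{r} & \tilde R/\mathfrak{c}
\end{tikzcd}
\]
is a pullback, that is, when the canonical map $R \to \tilde R \times_{\tilde R/\mathfrak{c}} R/\mathfrak{c}$ is an isomorphism. So the entire content of the statement is this ring-theoretic identity, and I would establish it directly. Note that the $s$-branch structure plays no role here: the identity holds for the normalization of any reduced ring, in agreement with the generality of the statement.

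First I would record the two structural properties of the conductor that drive the argument: the containment $\mathfrak{c} \subseteq R$, and the fact that $\mathfrak{c}$ is an ideal of $\tilde R$ (by definition it is the largest $\tilde R$-ideal contained in $R$). Together these exhibit $\mathfrak{c}$ as a common ideal of $R$ and $\tilde R$ sitting inside $R$. An immediate consequence is that the map $R/\mathfrak{c} \to \tilde R/\mathfrak{c}$ is injective, since its kernel is $(R \cap \mathfrak{c})/\mathfrak{c} = \mathfrak{c}/\mathfrak{c} = 0$, using $\mathfrak{c} \subseteq R$. I would therefore regard $R/\mathfrak{c}$ as a subring of $\tilde R/\mathfrak{c}$.

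Next I would unwind the fiber product. By definition $\tilde R \times_{\tilde R/\mathfrak{c}} R/\mathfrak{c}$ consists of pairs $(\tilde r, \bar s)$ having the same image in $\tilde R/\mathfrak{c}$; because $R/\mathfrak{c} \to \tilde R/\mathfrak{c}$ is injective, such a pair is determined by $\tilde r$ alone, and the matching condition says precisely that $\tilde r + \mathfrak{c}$ lies in the image of $R/\mathfrak{c}$. Thus the fiber product is identified with $\{\tilde r \in \tilde R : \tilde r + \mathfrak{c} \in R/\mathfrak{c}\}$. The canonical map from $R$ lands in this set and is injective, being the restriction of $R \hookrightarrow \tilde R$. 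For surjectivity, given $\tilde r$ with $\tilde r + \mathfrak{c} \in R/\mathfrak{c}$ there is some $r \in R$ with $\tilde r - r \in \mathfrak{c}$; since $\mathfrak{c} \subseteq R$ this forces $\tilde r = r + (\tilde r - r) \in R$. Hence the map is an isomorphism and the ring square is a pullback, as required.

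The only subtlety is the bookkeeping with the conductor, namely the repeated use of $\mathfrak{c} \subseteq R$ together with $\mathfrak{c}$ being a $\tilde R$-ideal; once these are in place the identification of $R$ with the fiber product is a short diagram chase, and I expect no genuine obstacle. If one wanted the square to be a pushout in the larger category of \emph{all} schemes rather than just affine ones, this would follow from the gluing/pinching theorems of Ferrand and Schwede, but that stronger statement is not needed for the applications in this paper.
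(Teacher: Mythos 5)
Your proof is correct and takes essentially the same route as the paper: both reduce to showing that the canonical map from $R$ to the ring fiber product $\tilde R \times_{\tilde R/\mathfrak{c}} R/\mathfrak{c}$ is an isomorphism, with surjectivity following from $\mathfrak{c} \subseteq R$ via the same lift-and-subtract argument. Your extra observation that $R/\mathfrak{c} \hookrightarrow \tilde R/\mathfrak{c}$ is injective is a harmless simplification of the bookkeeping, not a different method.
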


\begin{proof} We want to show that $R \hookrightarrow \tilde{R}$ is the pullback of $R/\mathfrak{c} \hookrightarrow \tilde{R}/\mathfrak{c}$ along the quotient $\tilde{R} \to \tilde{R}/\mathfrak{c}$. Let $A$ be this fiber product. There is a map $R \to A$ by universal property which is injective since the composition $R \to \tilde{R}$ is. Let $(x,\bar{y}) \in A$ so that $x \in \tilde{R}$, $\bar{y} \in \spec R/\mathfrak{c}$ and $x + \mathfrak{c} = \bar{y}$. Then $x - y \in \mathfrak{c} \subset R \subset \tilde{R}$ where $y \in R$ is some lift of $\bar{y}$ so $x \in R$ and $R \to A$ is surjective. 

\end{proof} 

We use this to show that any reduced curve singularity appears as the unique singularity of a connected rational curve with all irreducible components unibranch. 

\begin{corollary}\label{cor:pushout} Let $R$ be the completed local ring of an $s$-branched curve singularity. There exists a connected affine curve $Y$ with normalization $\bigsqcup_{i = 1}^s \mb{A}^1$ and unique singular point $0 \in Y$ such that $\widehat{\mathcal{O}_{Y,0}} \cong R$ and the diagram
$$
\begin{tikzcd}
\spec \left(\prod_{i = 1}^s k\llbracket x_i \rrbracket\right) \arrow[d] \arrow[r] & \bigsqcup_{i = 1}^s \mb{A}^1 \arrow[d] \\ \spec R \arrow[r] & Y
\end{tikzcd}
$$
commutes.
\end{corollary}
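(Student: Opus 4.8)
The plan is to construct $Y$ explicitly as a pushout (gluing) of the normalization $\bigsqcup_{i=1}^s \mb{A}^1$ along a finite closed subscheme, using Schwede's Proposition \ref{prop:pushout} to guarantee that the completed local ring of the glued point is exactly $R$. First I would replace the formal picture $\spec(\prod_i k\llbracket x_i\rrbracket)$ by the algebraic one: take $s$ copies of $\mb{A}^1 = \spec k[x_i]$ and let $\tilde{Y} := \bigsqcup_{i=1}^s \mb{A}^1$ be their disjoint union, with normalization map to a curve $Y$ to be built. The key observation is that the conductor ideal $\mathfrak{c}$, being generated by the monomials $x_i^{c_i}$, defines a finite closed subscheme $\spec(\tilde{R}/\mathfrak{c})$ supported at the origins of the branches, and this subscheme already lives algebraically inside $\tilde{Y}$ as $\spec\bigl(\prod_i k[x_i]/(x_i^{c_i})\bigr)$ since only finitely many powers of $x_i$ are involved.

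Next I would form the finite $k$-algebra $A := R/\mathfrak{c} = \tilde{R}/\mathfrak{c} \times_{\tilde R/\mathfrak c} (R/\mathfrak c)$ sitting inside $\tilde{R}/\mathfrak{c} = \prod_i k[x_i]/(x_i^{c_i})$, and define $Y$ to be the pushout of the diagram
\[
\spec(\tilde{R}/\mathfrak{c}) \hookrightarrow \tilde{Y}, \qquad \spec(\tilde{R}/\mathfrak{c}) \to \spec(R/\mathfrak{c}).
\]
Pushouts of affine schemes along closed immersions exist by Ferrand-type gluing, and because the map $\spec(R/\mathfrak c)\to\spec k$ is finite and supported at a single closed point (the ideal $R/\mathfrak c$ is local), the pushout $Y$ is a connected affine curve with a single point $0$ lying over the glued locus, away from which $\tilde{Y} \to Y$ is an isomorphism. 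The resulting commuting square is precisely the algebraic avatar of the diagram in the statement, obtained by base change. The main thing to verify is that $\tilde{Y}$ is genuinely the normalization of $Y$: this follows because normalization is local, the map is an isomorphism over the smooth locus, and the two fibers over $0$ account for all $s$ branches, so $Y$ is unibranch away from $0$ and exactly $s$-branched at $0$.

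Finally I would identify the completed local ring $\widehat{\mathcal{O}_{Y,0}}$ with $R$. This is the step I expect to be the main obstacle, since it requires reconciling the algebraic pushout with the \emph{formal} pushout of Proposition \ref{prop:pushout}. The point is that completion is exact and commutes with the fiber product defining $A$; since $\mathfrak{c}$ is cofinite, $\tilde R/\mathfrak c$ and $R/\mathfrak c$ are already complete (being finite-dimensional over $k$), so by Proposition \ref{prop:pushout} applied to $\widehat{\mathcal O_{Y,0}} \hookrightarrow \widehat{\tilde{\mathcal O}}$ we recover the same fiber-product description that defines $R$. Matching the two pullback squares over the common $\tilde R/\mathfrak c \hookleftarrow R/\mathfrak c$ gives a canonical isomorphism $\widehat{\mathcal{O}_{Y,0}} \cong R$ compatible with the inclusions into $\prod_i k\llbracket x_i\rrbracket$, which also shows the square in the statement commutes and realizes the normalization. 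The remaining checks—connectedness of $Y$ and that $0$ is its only singular point—are immediate from the construction, since $R$ is already assumed to be a (non-normal) local ring and the map is an isomorphism elsewhere.
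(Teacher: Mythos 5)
Your construction is the same as the paper's: form $Y$ as the affine pushout of $\spec(\tilde R/\mathfrak c)\hookrightarrow \bigsqcup_i \mb{A}^1$ along $\spec(\tilde R/\mathfrak c)\to\spec(R/\mathfrak c)$, obtain the map $\spec R\to Y$ from Schwede's Proposition~\ref{prop:pushout} together with the universal property, and identify $\widehat{\mathcal O_{Y,0}}\cong R$ because completion commutes with fiber products of rings. The argument is correct and matches the paper's proof, with only some extra (harmless) elaboration on connectedness and the normalization claim.
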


\begin{proof} The composition $\prod_{i = 1}^s k[x_i] \to \prod_{i = 1}^s k\llbracket x_i \rrbracket \to \tilde{R}/\mathfrak{c}$ is evidently surjective and so induces a closed embedding $\Spec \tilde{R}/\mathfrak{c} \hookrightarrow \bigsqcup_{i = 1}^s \mb{A}^1$. Now we define $Y$ to be the pushout of the diagram
$$
\begin{tikzcd}
\spec \tilde{R}/\mathfrak{c} \arrow[hook]{r} \arrow{d} & \bigsqcup_{i = 1}^s \mb{A}^1 \\ \spec R/\mathfrak{c} &. 
\end{tikzcd}
$$
which exists since everything is affine. By Proposition \ref{prop:pushout} and the universal property of pushouts, there exists a unique $\spec R \to Y$ making the diagram in the statement commute. Finally, the induced map $\widehat{\mathcal{O}_{Y,0}} \to R$  is an isomorphism since completion commutes with fiber products of rings.  

\end{proof}

\subsection{The branch-length filtration and graded degenerations} Let $R \subset \tilde{R}$ as above be the completed local rings of an $s$-branched reduced curve singularity $X$ and its normalization $\tilde{X}$. Let $A = \mc{O}_Y \subset R$ be the coordinate ring of a rational curve $Y$ as constructed in Corollary \ref{cor:pushout} so that the normalization $\tilde{A} = \prod k[x_i] \subset \prod k\llbracket x_i \rrbracket = \tilde{R}$. 

Denote by $v_i : \tilde{R} \to \mathbb{N}$ the composition of the projection onto $k\llbracket x_i \rrbracket$ with the valuation on $k\llbracket x_i \rrbracket$. This gives the order of vanishing of a function along the $i^{th}$ branch of the normalization. 

\begin{definition} We define an $\mb{N}^s$-filtration $\tilde{F}^\bullet$ on $\tilde{R}$ by
$$
\tF{a} := \{f \in \tilde{R} \ | \ v_i(f) \geq a_i \ i = 1, \ldots, s\} \subset \tilde{R} 
$$
for $\underline{a} \in \mb{N}^s$. The restriction to $R \subset \tilde{R}$ is denoted by
$$
\F{a} := \{f \in R \ | \ v_i(f) \geq a_i, \ i = 1,
\ldots, s\} \subset R.
$$
\end{definition}

Equivalently, $\tF{a}$ is the ideal of $\tilde{R}$ generated by $x_i^{a_i}$ for $i = 1, \ldots, s$ and $\F{a} = \tF{a} \cap R$ is the ideal of functions on $X$ vanishing to order at least $a_i$ along $B_i$. Note in particular that the conductor $\mathfrak{c} = \F{c} = \tF{c}$ and $\F{a} = \tF{a}$ if and only if $\tF{a} \subset \mathfrak{c}$. 

Finally, note that $\tF{\bullet}$ restricts to filtrations on $A$ and $\tilde{A}$ as well. We will abuse notation and also denote these by $\tilde{F}^\bullet$ and $F^\bullet$ respectively where the meaning will be clear from context. 

Let $\underline{w} = (w_1, \ldots, w_s)$ be a vector of non-negative integers and denote by
$$
\underline{w} \cdot \underline{a} = \sum w_i a_i
$$
the usual inner product. Given such a weight vector $\underline{w}$, we obtain an $\mb{N}$-filtration $\tilde{F}^\bullet_{\underline{w}}$ on $\tilde{R}$ (resp. $\tilde{A}$) by 
$$
\tilde{F}^n_{\underline{w}} := \sum_{\underline{w}\cdot \underline{a} \geq n} \tF{a}.
$$
Denote by $F_{\underline{w}}^\bullet$ the restriction of $\tilde{F}_{\underline{w}}^\bullet$ to $R$ (resp. $A$). 

\begin{definition} The (extended) Rees algebra of an $\mb{N}$-filtered ring $(B,F^\bullet)$ is 
$$
\mc{R}ees(B,F^\bullet) := \sum_{n \in \ZZ} F^n t^{-n} \subset B[t,t^{-1}]
$$
where by convention, $F^n = B$ for $n \le 0$. \end{definition}

The Rees algebra has the following useful properties: 
\begin{itemize}
\item $\mc{R}ees(B, F^\bullet)$ is flat over $k[t]$ by Lemma \ref{lem:flat} below; 
\item $\mc{R}ees(B, F^\bullet)/(t - a)\mc{R}ees(B,F^\bullet) \cong B$ for $a \neq 0$;
\item $\mc{R}ees(B,F^\bullet)/t\mc{R}ees(B,F^\bullet) \cong gr_{F^\bullet} B$ the associated graded ring. 
\end{itemize} 
These can be checked directly from the definition. We refer the reader to~\cite[Chapter 6]{Eis} for details on the Rees algebra.

We use this to construct an equinormalizable degeneration of a reduced curve singularity to a non-normal toric singularity. We take inspiration from Gr\"obner theory -- by choosing a sufficiently generic weight vector $\underline w$, the Rees algebra construction allows us to construct a degeneration whose special fiber is a monomial subring generated by the ``$\underline w$-leading terms'', stated formally below. Let $Y = \spec A$ be as in Corollary \ref{cor:pushout}.

\begin{theorem}\label{thm:monomial} There exists a flat family of connected affine curves $\mathscr{Y} \to \mb{A}^1$ with a section $\sigma : \mb{A}^1 \to \mathscr{Y}$ such that $\mathscr{Y} \setminus \sigma$ is smooth over $\mb{A}^1$, the $\delta$-invariant and number of branches of the singularity $(\mathscr{Y}_b, \sigma(b))$ are constant for all $b \in \mb{A}^1$, $\mathscr{Y}_b \cong Y$ for $b \neq 0$, and $\mathscr{Y}_0 = \spec A_0$ where $A_0 \subset \tilde{A} = \prod_{i = 1}^s k[x_i]$ is a monomial subring. 

\end{theorem}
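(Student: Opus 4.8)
The plan is to realize $\mathscr{Y}$ as the total space of the Rees degeneration attached to a sufficiently generic weight vector. Fix positive integers $\underline w = (w_1,\dots,w_s)$, form the extended Rees algebra $\mathscr{A} := \mc{R}ees(A, F^\bullet_{\underline w}) \subset A[t,t^{-1}]$, and set $\mathscr{Y} := \spec \mathscr{A}$ with its structure map to $\mb{A}^1 = \spec k[t]$. The three bulleted properties of the Rees algebra recorded above then do most of the bookkeeping for free: $\mathscr{A}$ is flat over $k[t]$ by Lemma~\ref{lem:flat}, the fiber over every $b \neq 0$ is $\spec A = Y$, and the fiber over $0$ is $\spec(gr_{F^\bullet_{\underline w}} A)$. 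Writing $A_0 := gr_{F^\bullet_{\underline w}} A$, it remains to (i) produce $\underline w$ for which $A_0$ is a monomial subring of $\tilde A$, (ii) identify the section $\sigma$ and verify smoothness of $\mathscr{Y}\setminus\sigma$ over $\mb{A}^1$, and (iii) check that the number of branches and the $\delta$-invariant are the same on every fiber.

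For (i), the first point is that the inclusion $A \hookrightarrow \tilde A$ is strict for the restricted filtration $F^\bullet_{\underline w}$, so passing to associated gradeds gives an injection $A_0 = gr A \hookrightarrow gr \tilde A$. Since $\tilde A = \prod_i k[x_i]$ has a monomial basis and the monomial $x_i^m$ has $\underline w$-weight $m w_i$, the graded ring $gr\tilde A$ is identified with $\tilde A$ itself, placing $x_i^m$ in degree $m w_i$; in particular $A_0$ is literally a graded subring of $\tilde A$, so it suffices to force it to be \emph{spanned} by monomials. Here I would exploit the conductor: $\mathfrak c \subseteq A$ is a $\underline w$-homogeneous (monomial) ideal with $\dim_k \tilde A/\mathfrak c = C < \infty$, so the question is finite modulo $\mathfrak c$. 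The finitely many monomials $\{x_i^m : 0 \le m < c_i\}$ form a basis of $\tilde A/\mathfrak c$, and I would choose $\underline w$ avoiding the finitely many conditions $m w_i = m' w_j$ with $i \neq j$ so that these monomials acquire pairwise distinct weights. For such generic $\underline w$ every weight-graded piece of $\tilde A/\mathfrak c$ is at most one-dimensional; as $gr(A/\mathfrak c)$ is a graded subspace of $gr(\tilde A/\mathfrak c) = \tilde A/\mathfrak c$, each of its pieces is spanned by a single monomial. Combined with $gr\,\mathfrak c = \mathfrak c$, this shows $A_0$ is a monomial subring.

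For (iii), note that $\mathfrak c \subseteq A_0 \subseteq \tilde A$ still holds, and since $\tilde A/\mathfrak c$ is finite-dimensional, $\tilde A$ is module-finite, hence integral, over $A_0$, with the two sharing the normalization $\prod_i \mb{A}^1$. Thus $A_0$ is reduced with exactly $s$ branches, matching $Y$. Away from $V(\mathfrak c)$ the finite map $\spec\tilde A \to \spec A_0$ is an isomorphism, so $\spec A_0$ is smooth except at the single point where all branches meet. For the $\delta$-invariant I would use that associated graded preserves dimensions of finite-dimensional filtered spaces: $\dim_k A_0/\mathfrak c = \dim_k A/\mathfrak c$, whence $\dim_k \tilde A/A_0 = C - \dim_k A_0/\mathfrak c = C - \dim_k A/\mathfrak c = \dim_k \tilde A/A$, so $\delta$ agrees on the special and generic fibers (the latter equal to $\delta(Y)=\delta(X)$ since $\widehat{\mc O_{Y,0}}\cong R$). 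This equinormalizability is the step I expect to require the most care.

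Finally, for (ii) I would take $\sigma$ to be the section cut out fiberwise by the singular point: for $w_i \ge 1$ one has $F^1_{\underline w} A = \mathfrak m$, the maximal ideal of $0 \in Y$, and the homogeneous ideal generated by the positive-degree part of $\mathscr{A}$ assembles these into a section $\sigma$ whose value on each fiber is exactly that fiber's singular locus. Since $\mathscr{Y} \to \mb{A}^1$ is flat and every fiber is smooth away from its origin, the fiberwise criterion (a flat morphism with smooth fibers is smooth) shows $\mathscr{Y}\setminus\sigma \to \mb{A}^1$ is smooth, and connectedness of the fibers is immediate because all branches pass through $\sigma$. The main obstacle throughout is the genericity argument of (i) together with the $\delta$-constancy of (iii); everything else follows formally from the stated properties of the Rees algebra once the conductor is used to reduce the monomiality question to the finite-dimensional quotient $\tilde A/\mathfrak c$.
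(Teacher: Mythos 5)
Your proposal is correct and follows the paper's construction essentially verbatim: the same Rees algebra $\mc{R}ees(A,F^\bullet_{\underline w})$ for a weight vector chosen so that the finitely many monomial degrees below the conductor become pairwise distinct, the same use of $\mathfrak c$ to reduce monomiality of $A_0$ to a finite-dimensional statement about $\tilde A/\mathfrak c$, and the same identification $gr\,\tilde A\cong\tilde A$ making $A_0\subset\tilde A$ the normalization. The one place you genuinely diverge is the constancy of $\delta$: the paper exhibits $\spec\mc{R}ees(\tilde A,\tilde F^\bullet_{\underline w})\to\mathscr Y$ as a simultaneous normalization and then invokes Teissier's theorem that $\delta$ is constant in equinormalizable families, whereas you compute directly that $\dim_k A_0/\mathfrak c=\dim_k gr(A/\mathfrak c)=\dim_k A/\mathfrak c$ (using that $\mathfrak c$ is a homogeneous ideal and that passing to the associated graded preserves the dimension of a finite-dimensional, separated, exhaustively filtered space) and conclude $\dim_k\tilde A/A_0=\dim_k\tilde A/A$. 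Your argument is more elementary and self-contained, though it silently uses that $F^n_{\underline w}A\subseteq\mathfrak c$ for $n\gg 0$ so that the induced filtration on $A/\mathfrak c$ is separated; this holds for the intended weight filtration (where $x_i^m$ has weight $mw_i$), so the step is sound. Your construction of $\sigma$ via the ideal generated by the positive-degree part of the Rees algebra is also a mild variant of the paper's definition of $\sigma$ as the scheme-theoretic image of $0\times\mb G_m$, and the two agree; the trade-off is that the paper gets flatness of $\sigma$ for free from Lemma~\ref{lem:flat}, while you must check directly that the quotient by your ideal is $k[t]$ (which it is). Finally, note that both you and the paper must treat the degree-zero piece separately (the idempotents $1_i$ all have weight $0$, so the distinctness condition cannot hold there), but since $gr^0 A=A/\mathfrak m=k\cdot 1$ this causes no harm.
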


\begin{proof} Observe that the filtrations $\tF{a} = \F{a}$ for all $a_i \geq c_i$, that is, $A$ and $\tilde{A}$ agree in degrees above the conductors. In particular, there are only finitely many degrees $a_{ij}$ such that $x_i^{a_{ij}} \in \tilde{A}$ but not $A$. Pick a positive integral weight vector such that $\underline{w}$ such that $\sum_i a_{ij} w_i$ are distinct integers for all choices of such $j$. That is, each monomial in low degree of $\tilde{A}$ is in a $1$-dimensional graded piece of the split filtration $\tilde{F}^\bullet_{\underline{w}}$. \footnote{By a split filtration, we mean one induced by a direct sum decomposition.} 

Now let $\mathscr{Y} = \spec \mc{R}ees(A, F_{\underline{w}}^\bullet) \to \mb{A}^1$. This is a flat family with all fibers away from zero isomorphic to $Y$ and central fiber $\mathscr{Y}_0 = \spec gr_{F^\bullet_{\underline{w}}} A =: \spec A_0$. The inclusion $\mc{R}ees(A,F^\bullet_{\underline{w}}) \subset A[t,t^{-1}]$ induces a dominant morphism $Y \times \mb{G}_m \to \mathscr{Y}$. Let $\sigma \subset \mathscr{Y}$ be the smallest closed subscheme through which the the singular locus $0 \times \mb{G}_m \subset Y \times \mb{G}_m$ factors -- that is, the scheme theoretic image of the singular locus~\cite[\href{http://stacks.math.columbia.edu/tag/01R5}{Tag 01R5}]{stacks-project}. Then $\sigma$ is flat over $\mb{A}^1$ by Lemma \ref{lem:flat} below and it has generic degree $1$ so it is a section. 

As $A \subset \tilde{A}$ is a finite ring extension of filtered rings, there is an induced finite ring extension $A_0 \subset gr_{\tilde{F}^\bullet_{\underline{w}}} \tilde{A}$. But $\tilde{A}$ is already graded so the latter is just $\tilde{A}$ and $A_0 \subset \tilde{A}$ is the normalization. By construction, $\sigma(0)$ is the vanishing locus of the ideal $\tilde{F}^{1,\ldots, 1} \cap A_0$ so the normalization is an isomorphism on the complement $\mathscr{Y}_0 \setminus \sigma(0)$. In particular, $\mathscr{Y} \setminus \sigma$ is smooth as required. 

Furthermore, the normalization can be done in families. Indeed the isotrivial family $$\spec \mc{R}ees(\tilde{A}, \tilde{F}^\bullet_{\underline{w}}) \to \mathscr{Y}$$ is a simultaneous normalization. It follows that the number of branches and the $\delta$-invariant of $(\mathscr{Y},\sigma(b))$ is constant (\cite{teiss} \cite[Theorem 5.2.2]{bug}). 

Finally, $A_0 \subset \tilde{A}$ is a graded subalgebra and we chose the weight $\underline{w}$ so that the graded pieces in degrees smaller than the conductor are one dimensional spanned by monomials and $A_0$ and $A$ agree in degree larger than the conductors so $A_0$ must be generated by monomials.

\end{proof}

\begin{remark} A special case of Theorem \ref{thm:monomial} for planar unibranch curves is used by Goldin and Teissier (see \cite[Proposition 3.1]{gt}) in order to study simultaneous resolution of a family curve singularities. Recently Kaveh and Murata \cite{km} used Rees algebras to construct analagous toric degenerations of projective varieties. 
\end{remark} 

\begin{lemma}\label{lem:flat} Let $X \xrightarrow{f} Y \to \mb{A}^1$ be morphisms of schemes such that $Y$ is the scheme theoretic image of $f$ and $X \to \mb{A}^1$ is flat. Then $Y \to \mb{A}^1$ is flat. In particular, if $A \subset R$ is a $k[t]$-algebra extension and $R$ is flat over $k[t]$, then so is $A$. \end{lemma} 
\begin{proof} The associated points of $X$ map onto the associated points of its scheme theoretic image $Y$. A morphism to $\mb{A}^1$ is flat if and only if associated points all map to the generic point so the result follows. 
\end{proof}

Given an ideal $\mathcal I \subset A$, we can define an ideal sheaf $\mathscr{I}$ on $\mathscr{Y}$ by the intersection 
$$
\mathcal I A[t,t^{-1}] \cap \mc{R}ees(A,F^\bullet_{\underline{w}}).
$$
It is evident that $\mathcal I_0 := \mathscr{I}/t \mathscr{I}$ is the associated graded ideal of $A_0$. 

\begin{corollary}\label{cor:monomial} Suppose $Z \subset Y$ is a closed subscheme with ideal $\mathcal I$ and let $\mathscr{I}$ be as above. Then the closed subscheme $\mathscr{Z} \subset \mathscr{Y}$ cut out by $\mathscr{I}$ is flat over $\mb{A}^1$. Furthermore $\mathscr{Z}_b \cong Z$ for $b \neq 0$ and $\mathscr{Z}_0$ is a monomial subscheme. \end{corollary}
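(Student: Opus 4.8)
The plan is to leverage the flatness results already established, particularly Theorem~\ref{thm:monomial} and Lemma~\ref{lem:flat}, rather than computing anything directly. The key observation is that the Rees algebra $\mc{R}ees(A, F^\bullet_{\underline{w}})$ is flat over $k[t]$, and we want to transfer this flatness to the quotient by $\mathscr{I}$. First I would set $\mathscr{Z} = \spec\left(\mc{R}ees(A,F^\bullet_{\underline{w}})/\mathscr{I}\right)$ and analyze the family fiber by fiber. Flatness over $\mb{A}^1$ is equivalent to $t$ being a nonzerodivisor on the coordinate ring of $\mathscr{Z}$ (since $k[t]$ is a PID and the family is of finite type), so the crux is to show that multiplication by $t$ is injective on $\mc{R}ees(A,F^\bullet_{\underline{w}})/\mathscr{I}$.

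Next I would identify the general and special fibers explicitly. For $b \neq 0$, quotienting by $(t-b)$ gives $\mathscr{Z}_b$; because $\mc{R}ees(A,F^\bullet_{\underline{w}})/(t-b) \cong A$ and the definition of $\mathscr{I}$ as $\mathcal I A[t,t^{-1}] \cap \mc{R}ees(A,F^\bullet_{\underline{w}})$ localizes correctly away from the origin, the fiber recovers $A/\mathcal I$, i.e. $\mathscr{Z}_b \cong Z$. For $b = 0$, the excerpt has already noted that $\mathcal I_0 := \mathscr{I}/t\mathscr{I}$ is the associated graded ideal in $A_0 = gr_{F^\bullet_{\underline{w}}} A$, so the special fiber $\mathscr{Z}_0 = \spec(A_0/\mathcal I_0)$ is a monomial subscheme, as $A_0$ is a monomial subring and $\mathcal I_0$ is homogeneous with respect to the $\underline{w}$-grading. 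The content ``$\mathscr{Z}_0$ is a monomial subscheme'' thus follows once flatness is in hand.

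The cleanest route to flatness is the standard fact that for a graded (or Rees) construction, the defining ideal $\mathscr{I}$ is \emph{$t$-saturated} by construction: the intersection $\mathcal I A[t,t^{-1}] \cap \mc{R}ees(A,F^\bullet_{\underline{w}})$ is precisely the set of elements of the Rees algebra whose image in $A[t,t^{-1}]$ lands in $\mathcal I A[t,t^{-1}]$, and this submodule has the property that if $t^n f \in \mathscr{I}$ then $f \in \mathscr{I}$. This saturation is exactly the statement that $t$ is a nonzerodivisor on $\mc{R}ees(A,F^\bullet_{\underline{w}})/\mathscr{I}$. Alternatively, one can present $\mathscr{Z}$ as the scheme-theoretic image of the flat family $Z \times \mb{G}_m \subset \mathscr{Y} \times_{\mb{A}^1} \mb{G}_m$ over $\mb{G}_m$ and invoke Lemma~\ref{lem:flat} directly: the scheme-theoretic closure of a flat family over the torus has no embedded components supported over the origin, hence is flat over $\mb{A}^1$.

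The main obstacle I anticipate is verifying the $t$-saturation (equivalently, that $\mathscr{I}$ has no $t$-torsion in the quotient) carefully, since this is where the specific definition of $\mathscr{I}$ via the intersection matters. Concretely, one must check that an element $f \in \mc{R}ees(A, F^\bullet_{\underline{w}})$ with $tf \in \mathcal I A[t,t^{-1}]$ already satisfies $f \in \mathcal I A[t,t^{-1}]$; this is immediate because $A[t,t^{-1}]$ is a localization in which $t$ is a unit, so $tf \in \mathcal I A[t,t^{-1}]$ forces $f \in \mathcal I A[t,t^{-1}]$, and $f$ was already in the Rees algebra. This elementary observation closes the gap, and the identifications of the two fibers are then routine unwinding of the definitions of the Rees algebra and the associated graded ideal.
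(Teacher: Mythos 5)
Your proposal is correct, and your second, ``alternative'' route --- presenting $\mathscr{Z}$ as the scheme-theoretic image of the constant family $Z \times \mb{G}_m$ under $Y \times \mb{G}_m \to \mathscr{Y}$ and invoking Lemma~\ref{lem:flat} --- is precisely the paper's proof, which is a one-liner. Your lead argument is genuinely different: you prove flatness directly by showing $\mathscr{I}$ is $t$-saturated, which is immediate from its definition as a contraction from $A[t,t^{-1}]$ where $t$ is a unit. That argument is sound, though your claim that flatness over $\mb{A}^1$ is \emph{equivalent} to $t$ being a nonzerodivisor is slightly imprecise as stated (over the PID $k[t]$ one needs torsion-freeness, i.e.\ every nonzero $p(t)$ acting injectively); it is repaired either by noting that the family is visibly constant over $\mb{G}_m$ so only the DVR $k[t]_{(t)}$ matters, or more cleanly by observing that the saturation statement exhibits $\mc{R}ees(A,F^\bullet_{\underline{w}})/\mathscr{I}$ as a submodule of $(A/\mathcal I)[t,t^{-1}]$, which is $k[t]$-torsion-free. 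What each approach buys: the paper's route is shorter and reuses Lemma~\ref{lem:flat} (associated points map to the generic point), but relies on identifying $\mathscr{Z}$ with a scheme-theoretic image; your route makes the mechanism explicit at the level of the Rees algebra and generalizes verbatim to any filtered degeneration, at the cost of a little more bookkeeping. Your identifications of the fibers ($\mathscr{Z}_b \cong Z$ for $b \neq 0$ via localization at $t$, and $\mathscr{Z}_0 = \spec(A_0/\mathcal I_0)$ with $\mathcal I_0$ the associated graded ideal) match what the paper dismisses as following ``from the definition of $\mathscr{I}$.''
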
 

\begin{proof} We need only check flatness as the rest follows from the definition of $\mathscr{I}$. By construction, $\mathscr{Z}$ is the scheme theoretic image of the constant family $Z \times \mb{G}_m$ under the dominant morphism $Y \times \mb{G}_m \to \mathscr{Y}$ so $\mathscr{Z}$ is flat over $\mb{A}^1$ by Lemma \ref{lem:flat}. \end{proof}

\section{Degree--branch-length bounds} 

Recall the definition of the branch-length vector of a subscheme in Definition~\ref{def:branch-length}. Our proof  proceeds in two main steps. In this section we show that the quantity $d - \sum a_i$ for which $\Hilb^{d,\underline{\bm{a}}}(X,0)$ is nonempty is uniformly bounded by the invariants of the singularity $(X,0)$. In the next section, we use these bounds to embed $\Hilb^{d,\underline{\bm{a}}}(X,0)$ into a Grassmannian and show that these locally closed subsets stabilize in the Grothendieck ring. \\

\begin{lemma}\label{lem:delta} Let $\mathcal J \subset R$ be a finite colength ideal and suppose there exist $f_i \in \mathcal J$ for $i = 1, \ldots, s$ with $v_i(f_i) = l_i$. Then 
\[
F^{l_1 + c_1, \ldots, l_s + c_s} = \tilde{F}^{l_1 + c_1, \ldots, l_s + c_s} \subset \mathcal J.
\]

\end{lemma}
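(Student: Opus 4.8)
The plan is to treat the asserted equality and the asserted containment separately, writing $\underline{b} = (l_1 + c_1, \ldots, l_s + c_s)$ throughout. The equality $\F{b} = \tF{b}$ is immediate: since $b_i = l_i + c_i \geq c_i$ for every $i$ and the filtration $\tF{\bullet}$ is order-reversing in each index, we have $\tF{b} \subseteq \tF{c} = \mathfrak{c}$, and by the criterion recorded after the definition of the filtration (namely $\F{a} = \tF{a}$ if and only if $\tF{a} \subseteq \mathfrak{c}$) this forces $\F{b} = \tF{b}$. So the entire content of the lemma is the containment $\tF{b} \subseteq \mathcal{J}$, and it is this that I would prove.

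For the containment, I would first reduce to a branch-by-branch statement. Writing $\tilde{R} = \prod_{j=1}^s k\llbracket x_j \rrbracket$, any $f \in \tF{b}$ decomposes as $f = \sum_{i} f^{(i)}$ into its pure-branch components, and $v_i(f^{(i)}) = v_i(f) \geq b_i$, so each $f^{(i)}$ again lies in $\tF{b}$ and is supported on the single branch $i$. Thus $\tF{b} = \sum_{i=1}^s T_i$, where $T_i := \{g \in \tF{b} : g \text{ supported on branch } i\}$ is exactly the pure-branch ideal $x_i^{b_i} k\llbracket x_i \rrbracket$. It therefore suffices to show $T_i \subseteq \mathcal{J}$ for each fixed $i$.

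The main step is to produce $T_i$ from the single element $f_i \in \mathcal{J}$ by multiplying by $x_i^{c_i}\tilde{R}$. The point is that $x_i^{c_i}$ lies in the conductor $\mathfrak{c}$, which is an ideal of $\tilde{R}$ contained in $R$; hence for every $t \in \tilde{R}$ the product $x_i^{c_i} t$ lands in $R$, and consequently $(x_i^{c_i} t) f_i \in \mathcal{J}$ because $\mathcal{J}$ is an ideal of $R$. I would then identify $x_i^{c_i}\tilde{R} f_i$ explicitly: the generator $x_i^{c_i}$ annihilates every component of $f_i$ on the branches $j \neq i$, while on branch $i$ we have $f_i^{(i)} = u_i x_i^{l_i}$ with $u_i \in k\llbracket x_i\rrbracket^\times$, so as $t$ ranges over $\tilde{R}$ its $i$-th component ranges over all of $k\llbracket x_i\rrbracket$ and $x_i^{c_i} t^{(i)} f_i^{(i)}$ ranges over $x_i^{l_i + c_i} k\llbracket x_i \rrbracket = T_i$. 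Hence $T_i = x_i^{c_i}\tilde{R} f_i \subseteq \mathcal{J}$, and summing over $i$ gives $\tF{b} \subseteq \mathcal{J}$.

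The step I expect to be the crux is the choice to multiply $f_i$ by $x_i^{c_i}\tilde{R}$ rather than by elements of $R$ alone: this single move simultaneously enters $R$, via the defining property $\mathfrak{c}\tilde{R} \subseteq R$ of the conductor, and projects onto branch $i$, discarding the uncontrolled components $f_i^{(j)}$ for $j \neq i$. Once this is in hand, the only thing left is the valuation bookkeeping on branch $i$ — that multiplication by the unit $u_i$ and by $x_i^{c_i}$ shifts the valuation to exactly $l_i + c_i$ and recovers the full pure-branch ideal — which is routine. Note that no completeness or convergence argument is needed, since each $T_i$ is obtained directly as an honest product of $\tilde{R}$-modules.
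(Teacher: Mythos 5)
Your argument is correct, and although it rests on the same basic idea as the paper's proof --- use the conductor to multiply the $f_i$ into $\mathcal{J}$ --- the execution is genuinely different and in fact tighter. The paper reduces to a single branch, writes $f = x^l + g(x)$, and multiplies only by monomials $x^m \in R$; since the inverse of the unit $1 + x^{-l}g(x)$ need not lie in $R$, this yields $x^{l+m}$ only up to a tail of high order, which is then discarded using the finite-colength hypothesis ($x^n \in \mathcal{J}$ for $n \gg 0$). Your key move is to multiply $f_i$ by all of $x_i^{c_i}\tilde{R}$ rather than by elements of $R$ alone: this is legitimate because $x_i^{c_i}\tilde{R} \subseteq \mathfrak{c} \subseteq R$, and since $t^{(i)}$ may be taken to be $u_i^{-1}h$ for any $h \in k\llbracket x_i\rrbracket$, you get the exact identity $x_i^{c_i}\tilde{R}f_i = x_i^{l_i+c_i}k\llbracket x_i\rrbracket$ with no error term. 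This buys two things: it handles the off-branch components of $f_i$ explicitly (where the paper's ``without loss of generality $s=1$'' is somewhat terse), and it removes any appeal to finite colength or to an approximation/closedness argument, so your proof establishes the containment for an arbitrary ideal $\mathcal{J}$ containing the $f_i$. Both arguments treat the equality $F^{l_1+c_1,\ldots,l_s+c_s} = \tilde{F}^{l_1+c_1,\ldots,l_s+c_s}$ identically, as an immediate consequence of both ideals lying inside the conductor.
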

\begin{proof} The equality between the two ideals is clear since they are both contained in the conductor. We claim that given $f_i \in \mathcal J$ with $v_i(f_i) = l_i$ then $x_i^{l_i + m} \in \mathcal J$ for all $m \geq c_i$. It suffices to check this one branch at a time so without loss of generality suppose $s = 1$. 

Up to scaling, we may write $f = x^l + g(x)$ where $g(x)$ is higher order terms. We have $x^mf = x^{l + m} + x^m g(x) \in \mathcal J$ for any $m \geq 2c$. In particular, $x^{l + m} + h(x) \in \mathcal J$ for some $h(x)$ of arbitrarily large order. Since $\mathcal J$ is finite colength, $x^n \in \mathcal J$ for all $n$ large enough so $x^{l + m} \in \mathcal J$.  

\end{proof} 

\begin{proposition}\label{prop:inclusions} Let $\mathcal I$ be the ideal of a closed subscheme $Z \subset X$ having length $d$ and branch-length vector $\underline{\bm{a}} = (a_1, \ldots, a_s)$. Then we have the inclusions
$$
F^{\underline{\bm{a}} + \underline{\bm{c}}} \subset \mathcal I \subset  \F{\bm{a}}.
$$
\end{proposition}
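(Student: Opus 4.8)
The plan is to translate the branch-length vector into valuation data on each branch, after which both inclusions fall out — one immediately, the other from Lemma~\ref{lem:delta}. First I would unwind the definition of $a_i$. Pulling back the universal subscheme $Z_{\mathcal I} = \spec(R/\mathcal I)$ along the branch map $\varphi_i : B_i \to X$ amounts to extending $\mathcal I$ along the composite $R \hookrightarrow \tilde{R} \to k\llbracket x_i\rrbracket$, so that
$$
\varphi_i^\star(Z_{\mathcal I}) = \spec\left(k\llbracket x_i\rrbracket / \mathcal I\, k\llbracket x_i\rrbracket\right).
$$
Since $k\llbracket x_i\rrbracket$ is a discrete valuation ring with valuation $v_i$, the extended ideal $\mathcal I\, k\llbracket x_i\rrbracket$ is principal, generated by the element of least valuation, and hence equals $(x_i^{m_i})$ with $m_i = \min\{v_i(f) : f \in \mathcal I\}$. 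Its colength over $k$ is exactly $m_i$. Comparing with Definition~\ref{def:branch-length} gives the key identification $a_i = \min\{v_i(f) : f \in \mathcal I\}$: the branch-length $a_i$ is the smallest order of vanishing along the $i^{\mathrm{th}}$ branch attained by an element of $\mathcal I$.

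With this in hand, the right-hand inclusion is immediate. By definition of the minimum, every $f \in \mathcal I$ satisfies $v_i(f) \geq a_i$ for all $i$, which is precisely the condition cutting out $\F{\bm{a}}$ inside $R$; hence $\mathcal I \subset \F{\bm{a}}$.

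For the left-hand inclusion I would feed the identification directly into Lemma~\ref{lem:delta}. Because each $a_i$ is realized as a minimum, for every $i$ there is some $f_i \in \mathcal I$ with $v_i(f_i) = a_i$. Applying Lemma~\ref{lem:delta} with $l_i = a_i$ then yields
$$
F^{a_1 + c_1, \ldots, a_s + c_s} = \tilde{F}^{a_1 + c_1, \ldots, a_s + c_s} \subset \mathcal I,
$$
which is exactly the desired inclusion $F^{\underline{\bm{a}} + \underline{\bm{c}}} \subset \mathcal I$.

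The substantive content of the argument is concentrated in Lemma~\ref{lem:delta}, so I expect the only genuinely delicate point here to be the bookkeeping in the first step: verifying that pulling back $Z_{\mathcal I}$ to a branch really computes the extended ideal in the DVR, and therefore that the branch-length $a_i$ coincides with the minimal branch valuation of elements of $\mathcal I$. Once that identification is secured, both inclusions are formal — the upper one by the defining property of the minimum, the lower one by invoking the lemma with the valuation-minimizing elements $f_i$.
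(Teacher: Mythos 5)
Your proof is correct and follows essentially the same route as the paper: the paper likewise identifies $\mathcal I \tilde R_i$ with $(x_i^{a_i})$ using the DVR structure of $k\llbracket x_i\rrbracket$ (so that $a_i$ is the minimal valuation attained on $\mathcal I$ along the $i^{\mathrm{th}}$ branch), deduces the upper inclusion from the fact that no element of $\mathcal I$ has smaller valuation, and obtains the lower inclusion by producing elements $f_i \in \mathcal I$ with $v_i(f_i) = a_i$ and invoking Lemma~\ref{lem:delta}. The bookkeeping step you flag as delicate is exactly the one the paper carries out.
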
 

\begin{proof} There is a morphism of $R$-modules $\mathcal I \to \mathcal I \tilde R_i$ given by composing the inclusion $\mathcal I \subset R \subset \tilde{R}$ with the projection $\tilde R \to \tilde R_i$. The image $\mathrm{im}(\mathcal I \to \mathcal I \tilde R_i)$ generates $\mathcal I \tilde R_i$ as an $R_i$-module. Explicitly, this map is just $F \mapsto F \mod (x_1, \ldots, \hat{x}_i, \ldots x_s) \in k\llbracket x_i \rrbracket$.  

Observe that the quotient $\tilde R_i/\mathcal{I}\tilde R_i$ has dimension $a_i$ over $k$. Since the ideals of a power series ring are linearly ordered, it must be isomorphic to $k\llbracket x_i\rrbracket/(x_i^{a_i})$. We conclude that the monomial $x_i^{a_i}$ generates $\mathcal{I}\tilde R_i$ as an $\tilde R_i$-module. In particular, $x_i^{a_i} \in \mathrm{im}(\mathcal I \to \mathcal I \tilde R_i)$ so there exists an $F \in \mathcal I$ with 
$$
F \equiv x_i^{a_i}u(x_i) \mod (x_1, \ldots, \hat{x}_i, \ldots x_s). 
$$
where $u(x_i)$ is a unit in $k\llbracket x_i \rrbracket$. It follows that $F$ can be written $F = x_i^{a_i}u(x_i)+ G$ where $G \in \mathrm{ker}(\mathcal I\to \mathcal I\tilde R_i)$ and $x_i G = 0$. In particular, $v_i(F) = a_i$. As this holds for each $i$, we may apply Lemma \ref{lem:delta} to obtain an inclusion
\[
F^{\underline{\bm{a}} + \underline{\bm{c}}} = \tilde{F}^{\underline{\bm{a}} + \underline{\bm{c}}} \subset \mathcal I
\]
as required. 

On the other hand, since $\mathcal{I}$ has branch-length vector $(a_1, \ldots, a_s)$, then the order of vanishing of $f \in \mathcal{I}$ along the branch $B_i$ cannot have valuation smaller than $a_i$. Applying this to each branch, we see that $\mathcal I \subset \F{\bm{a}}$. 

\end{proof}


\begin{proposition}\label{prop:bounds}
Let $\mathcal I$ be the ideal of a closed subscheme $Z \subset X$ having length $d$ and branch-length vector $\underline{\bm{a}} = (a_1,\ldots, a_s)$. Then we have

\[
- \delta \leq d-\sum_{i=1}^s a_i \leq C - \delta.
\]

\noindent where the second inequality is strict if $(X,0)$ is not smooth. 

\end{proposition}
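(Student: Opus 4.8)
The plan is to derive both inequalities from the sandwich
\[
F^{\underline{\bm{a}} + \underline{\bm{c}}} \subset \mathcal I \subset F^{\underline{\bm{a}}}
\]
established in Proposition~\ref{prop:inclusions}, by comparing $k$-colengths. Passing to quotients turns these containments into the purely numerical bounds
\[
\dim_k R/F^{\underline{\bm{a}}} \ \leq\ d \ \leq\ \dim_k R/F^{\underline{\bm{a}} + \underline{\bm{c}}},
\]
since $F^{\underline{\bm{a}} + \underline{\bm{c}}} \subset \mathcal I$ makes $R/\mathcal I$ a quotient of $R/F^{\underline{\bm{a}} + \underline{\bm{c}}}$, and $\mathcal I \subset F^{\underline{\bm{a}}}$ makes $R/F^{\underline{\bm{a}}}$ a quotient of $R/\mathcal I$. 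Thus the whole statement reduces to computing the colength of a single filtration ideal $F^{\underline{\bm{b}}} = \tilde F^{\underline{\bm{b}}} \cap R$ in terms of $\underline{\bm{b}}$, the $\delta$-invariant, and the conductor.

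To compute this colength I would use the short exact sequence
\[
0 \longrightarrow R/F^{\underline{\bm{b}}} \longrightarrow \tilde R/\tilde F^{\underline{\bm{b}}} \longrightarrow \tilde R/(R + \tilde F^{\underline{\bm{b}}}) \longrightarrow 0,
\]
whose first map is injective precisely because $F^{\underline{\bm{b}}} = R \cap \tilde F^{\underline{\bm{b}}}$. As $\tilde R = \prod_i k\llbracket x_i\rrbracket$ and $\tilde F^{\underline{\bm{b}}} = \prod_i (x_i^{b_i})$, the middle term has dimension $\dim_k \tilde R/\tilde F^{\underline{\bm{b}}} = \sum_i b_i$, so that
\[
\dim_k R/F^{\underline{\bm{b}}} = \sum_i b_i - \varepsilon(\underline{\bm{b}}), \qquad \varepsilon(\underline{\bm{b}}) := \dim_k \tilde R/(R + \tilde F^{\underline{\bm{b}}}).
\]
Because $R \subseteq R + \tilde F^{\underline{\bm{b}}}$, the correction term $\varepsilon(\underline{\bm{b}})$ is a quotient of $\tilde R/R$, whence $0 \leq \varepsilon(\underline{\bm{b}}) \leq \delta$; and as soon as $\tilde F^{\underline{\bm{b}}} \subset \mathfrak c$ (in particular whenever $\underline{\bm{b}} \geq \underline{\bm{c}}$) we have $R + \tilde F^{\underline{\bm{b}}} = R$ and therefore $\varepsilon(\underline{\bm{b}}) = \delta$.

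Applying this with $\underline{\bm{b}} = \underline{\bm{a}}$ gives $d \geq \sum_i a_i - \varepsilon(\underline{\bm{a}}) \geq \sum_i a_i - \delta$, which is the lower bound, while taking $\underline{\bm{b}} = \underline{\bm{a}} + \underline{\bm{c}}$ (where $\varepsilon = \delta$) gives $d \leq \sum_i (a_i + c_i) - \delta = \sum_i a_i + C - \delta$, the upper bound. The point I expect to require the most care is the strictness when $(X,0)$ is singular: equality in the upper bound forces $\mathcal I = F^{\underline{\bm{a}} + \underline{\bm{c}}}$, since the latter is contained in $\mathcal I$ with equal colength. But $F^{\underline{\bm{a}} + \underline{\bm{c}}} = \tilde F^{\underline{\bm{a}} + \underline{\bm{c}}}$ has image $(x_i^{a_i + c_i})$ in $\tilde R_i$, so its branch-length along the $i^{\mathrm{th}}$ branch is $a_i + c_i$; for the branch-length vector to be $\underline{\bm{a}}$ we would need $c_i = 0$ for all $i$, i.e. $\mathfrak c = \tilde R$ and $X$ smooth. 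This contradicts the hypothesis that $\mathcal I$ has branch-length vector $\underline{\bm{a}}$, forcing the strict inequality. Beyond this strictness argument, the remaining work is a routine exact-sequence dimension count.
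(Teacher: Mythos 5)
Your proof is correct, and for the lower bound it takes a genuinely different --- and more elementary --- route than the paper. Both arguments start from the same sandwich $F^{\underline{\bm{a}}+\underline{\bm{c}}} \subset \mathcal I \subset F^{\underline{\bm{a}}}$ of Proposition~\ref{prop:inclusions}, and your upper bound and strictness arguments coincide with the paper's (equality forces $\mathcal I = F^{\underline{\bm{a}}+\underline{\bm{c}}}$, whose branch-length vector is $\underline{\bm{a}}+\underline{\bm{c}}$, impossible unless all $c_i=0$). The divergence is in the lower bound: the paper regards $\dim_k R/F^{\underline{\bm{a}}}$ as combinatorially opaque and controls it by running the equinormalizable Rees-algebra degeneration of Theorem~\ref{thm:monomial} and Corollary~\ref{cor:monomial} to a monomial curve, where the quotient acquires a monomial basis and the count $\geq \sum a_i - \delta$ becomes visible. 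You instead observe that the exact value of $\dim_k R/F^{\underline{\bm{a}}}$ is never needed: the exact sequence
\[
0 \longrightarrow R/F^{\underline{\bm{b}}} \longrightarrow \tilde R/\tilde F^{\underline{\bm{b}}} \longrightarrow \tilde R/(R + \tilde F^{\underline{\bm{b}}}) \longrightarrow 0
\]
(injectivity on the left being exactly $F^{\underline{\bm{b}}} = R \cap \tilde F^{\underline{\bm{b}}}$) pins the colength to $\sum_i b_i - \varepsilon(\underline{\bm{b}})$ with $0 \leq \varepsilon(\underline{\bm{b}}) \leq \delta$, since the cokernel is a quotient of $\tilde R/R$, and $\varepsilon = \delta$ once $\tilde F^{\underline{\bm{b}}} \subset \mathfrak c$. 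This yields both bounds uniformly and, as far as this proposition is concerned, makes the entire degeneration apparatus unnecessary; what the paper's heavier approach buys is finer information (the explicit monomial model of the special fibre and the toric picture exploited in the extended example), while yours buys brevity and transparency. One small point of care: your argument uses finiteness of $\delta = \dim_k \tilde R/R$, which holds here because the normalization of a reduced curve singularity is a finite module, so nothing is lost.
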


\begin{proof} By Proposition \ref{prop:inclusions}, there are surjections 
$$
R/F^{\underline{\bm{a}} + \underline{\bm{c}}} \to R/\mathcal I \to R/\F{\bm{a}}
$$
which give us bounds
$$
\dim_k R/\F{\bm{a}} \le d \le \dim_k R/F^{\underline{\bm{a}} + \underline{\bm{c}}}. 
$$

For the upper bound, note that
\begin{align*}
d  &\le \dim_k R/ F^{\underline{\bm{a}} + \underline{\bm{c}}} \\
&= \dim \tilde{R}/\tilde{F}^{\underline{\bm{a}} + \underline{\bm{c}}} - \dim \tilde{R}/R 
\\ &= \sum_{i = 1}^s a_i + \sum_{i = 1}^s c_i - \delta.
\end{align*}
where we have used that $F^{\underline{\bm{a}} + \underline{\bm{c}}} = \tilde{F}^{\underline{\bm{a}} + \underline{\bm{c}}}$. Note however, that we have equality if and only if $J = F^{\underline{\bm{a}} + \underline{\bm{c}}}$. If $(X,0)$ is not smooth, then this is impossible given that necessarily some $c_i > 0$ but $J$ has length profile $\underline{\bm{a}}$. Thus $d \le \sum a_i + C - \delta - 1$ when $(X,0)$ is not smooth.

The lower bound is more delicate as $\dim_k R/F^{\underline{\bm{a}}}$ depends on how the filtration on the normalization $\tF{\bm{a}}$ meets the singularity $R \subset \tilde{R}$, and thus could have complicated combinatorics. To overcome this difficulty, we use an equinormalizable degeneration of $X$ to a toric singularity and observe that in the toric case, the filtration is controlled by monomials. The lower bound is more apparent in this case.

Choose $Y = \Spec A$ a rational curve as in Proposition~\ref{cor:pushout} with normalization $\tilde{A}$ and let $\mathscr{Y} \to \mb{A}^1$ be an equinormalizable degeneration as in Theorem \ref{thm:monomial} to a monomial curve $\mathscr{Y}_0$. By Corollary \ref{cor:monomial}, applied to the idea $\F{\bm{a}} \subset A$, there is a flat family of subschemes $\mathscr{Z} \subset \mathscr{Y}$ of the total space whose nonzero fibers are each isomorphic to $\Spec A/\F{\bm{a}}$. Furthermore, the special fiber $\mathscr{Z}_0$ is identified with $\Spec A_0/F_0$ where $F_0 = \tF{\bm{a}} \cap A_0$ is a monomial ideal of the monomial subring $A_0 \subset A$. 

The algebra $A_0/F_0$ has a monomial basis, specifically consisting of those monomials $x_i^n \in \tilde{A}$ for $0 \le n \le a_i - 1$ that are contained in the toric singularity defined by $A_0$. The number of branches and $\delta$-invariant of $\mathscr{Y}_0$ are the same as that of $Y$, which are in turn the same as that of $(X,0)$. By flatness of the degeneration (Corollary \ref{cor:monomial}) we conclude that 
\begin{align*}
d &\geq \dim_k A/\F{\bm{a}} \\
&= \dim_k A_0/F_0 \\
&\geq \left(\sum_{i =1}^s a_i\right) - \delta,
\end{align*}
as desired.
\end{proof}

\section{Motivic stabilization for the branch-length strata} In this section we prove the key stabilization result from which we deduce rationality. As a corollary we have that the dimensions of the punctual Hilbert schemes stabilize (Corollary~\ref{cor:dim}). This is a generalization of \cite[Theorem 3]{ps}. Following the ideas of \cite{ps}, we use the uniform bounds on an ideal with fixed branch-length vector proved in the previous section to embed the strata $\Hilb^{d, \underline{\bm{a}}}$ as subvarieties of a fixed Grassmannian of $\tilde{R}/N_0$ for an appropriate $R$-submodule $N_0 \subset \tilde{R}$. The image of this embedding lies inside a generalization of the Pfister--Steenbrink variety $\mathcal{M}$ defined in \cite[Section 2]{ps}. 

We first argue that incrementing one entry in the branch-length vector stabilizes once the length on that branch is larger than the conductor. Note that for fixed $d$, the number of possible branch vectors of length $d$ subschemes on a given singularity is finite. Fix an integer tuple $ \underline{\bm{a}}' = (a_1,\ldots, a_{s-1})$ of length $s-1$. Let $\Hilb^{d,\underline{\bm{a}}',e}(X,0)$ denote the stratum of $\Hilb^{d,a_1,\ldots,a_{s-1},e}$ with branch-length vector $(a_1,\ldots,a_{s-1},e)$. 

\begin{theorem}\label{thm: stab}
Let $(X,0)$ be a reduced curve singularity with $s$ branches. Then for $e \geq c_s$, we have equalities
\[
[\Hilb^{d,\underline{\bm{a}}',e}(X,0)] \cong [\Hilb^{d+1,\underline{\bm{a}}',e+1}(X,0)].
\]
in the Grothendieck ring. 
\end{theorem}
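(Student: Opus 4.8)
The plan is to realize the asserted equality as an honest isomorphism of varieties induced by multiplication by the element
\[
\eta := (1,\ldots,1,x_s)\in \tilde R = \prod_{i=1}^s k\llbracket x_i\rrbracket,
\]
a non-zero-divisor that acts as the identity on the first $s-1$ branches and as the uniformizer on the last; this is the multibranch analogue of the multiplication-by-the-uniformizer map of Pfister--Steenbrink~\cite{ps}. The hypothesis $e\geq c_s$ is exactly what makes $m_\eta$ interact well with the subring $R\subset\tilde R$. If $\mathcal I$ has branch-length vector $\underline{\bm a}:=(\underline{\bm a}',e)$, then by Proposition~\ref{prop:inclusions} every $f\in\mathcal I$ has $v_s(f)\geq e\geq c_s$, so its last component lies in the conductor $\mathfrak c$; since $\mathfrak c$ is an ideal of $\tilde R$, the correction $\eta f - f$ lies in $\mathfrak c\subset R$, whence $\eta f\in R$. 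Thus $\eta\mathcal I\subset R$ is again an ideal. Running the same conductor computation backwards, every ideal $\mathcal I'$ of branch-length $\underline{\bm a}^+:=(\underline{\bm a}',e+1)$ (so $v_s\geq e+1>c_s$) satisfies $\mathcal I'\subset \eta R$, so that $\eta^{-1}\mathcal I'$ is an ideal of $R$. This yields a bijection $\mathcal I\mapsto \eta\mathcal I$ on closed points between the two strata: the valuations on branches $i<s$ are unchanged, $v_s$ increases by one, and since $\dim_k R/\eta R = 1$ (a short $\delta$-invariant computation) the colength jumps from $d$ to $d+1$.

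To upgrade this bijection to an isomorphism of varieties I would use the Grassmannian embedding that organizes the rest of the section. Set
\[
V_e := \F{\bm a}\big/F^{\underline{\bm a}+\underline{\bm c}}, \qquad V_{e+1} := F^{\underline{\bm a}^+}\big/F^{\underline{\bm a}^+ + \underline{\bm c}},
\]
finite-dimensional $k$-vector spaces carrying an $R$-action. By Proposition~\ref{prop:inclusions} every ideal in $\Hilb^{d,\underline{\bm a}',e}(X,0)$ is squeezed between the two fixed modules defining $V_e$, so it determines an $R$-submodule of $V_e$ of fixed dimension, and, following the representability discussed after Proposition~\ref{strata} and the Grassmannian construction of~\cite[Part 3]{FGAx}, the stratum embeds as a locally closed subvariety of the Grassmannian of such submodules. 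The computations above show that $m_\eta$ carries $\F{\bm a}$ isomorphically onto $F^{\underline{\bm a}^+}$ and $F^{\underline{\bm a}+\underline{\bm c}}$ onto $F^{\underline{\bm a}^+ + \underline{\bm c}}$, hence induces an $R$-module isomorphism $V_e\xrightarrow{\sim} V_{e+1}$.

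The conclusion then follows because $m_\eta$ intertwines all the structure used to cut out the strata. It sends $R$-submodules to $R$-submodules and preserves their dimension; moreover it intertwines the branch-length conditions, since the minimal value of $v_i$ over an ideal is preserved for $i<s$ and raised by exactly one for $i=s$. Using that $\dim_k \F{\bm a}/F^{\underline{\bm a}^+}=1$ (this quotient is spanned by the monomial $x_s^e$, which lies in $R$ precisely because $e\geq c_s$), the relevant submodule dimension, and hence the ambient Grassmannian, agrees on the two sides. Therefore $m_\eta$ induces an isomorphism of Grassmannians identifying $\Hilb^{d,\underline{\bm a}',e}(X,0)$ with $\Hilb^{d+1,\underline{\bm a}',e+1}(X,0)$ as locally closed subvarieties, giving the claimed equality of classes in $K_0(\mathrm{Var})$.

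The step I expect to demand the most care is the conductor bookkeeping that pins the hypothesis $e\geq c_s$: one must verify both that $\eta\mathcal I\subset R$ and, in the reverse direction, that $\mathcal I'\subset \eta R$ (equivalently, the surjectivity of $m_\eta$ on the filtered pieces), and check that these genuinely fail when $e<c_s$, so the stabilization really begins at the conductor. The secondary subtlety is confirming that the two a priori distinct Grassmannians have matching dimension, so that $m_\eta$ produces an isomorphism rather than merely a closed embedding; this reduces to the single fact $\dim_k \F{\bm a}/F^{\underline{\bm a}^+}=1$. Everything else—representability of the strata inside the Grassmannian, and functoriality of the construction in families—is formal once the linear isomorphism $V_e\cong V_{e+1}$ is in hand.
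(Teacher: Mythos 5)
Your proposal is correct and is essentially the paper's own argument: the paper likewise sandwiches $\mathcal I$ between $F^{\underline{\bm a}+\underline{\bm c}}$ and $F^{\underline{\bm a}}$ via Proposition~\ref{prop:inclusions}, embeds each stratum into a Grassmannian of $R$-submodules, and obtains the stabilization by multiplying by $\xi_s=(1,\dots,1,x_s)$ --- your $\eta$ --- with the same conductor bookkeeping at $e\ge c_s$; the only cosmetic difference is that the paper first rescales by $\epsilon_d$ so that all strata land in a single fixed Grassmannian $\mathrm{Gr}(C,\tilde R/N_0)$ and compares images there, rather than transporting between two isomorphic Grassmannians by $m_\eta$. (One small imprecision: $\eta\notin R$ in general, so $\dim_k R/\eta R$ is not literally defined; the jump $d\mapsto d+1$ follows instead from $\dim_k\tilde R/\eta\tilde R=1$ together with the constancy of the $\delta$-invariant.)
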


\begin{proof}
We introduce the quantity
$$
\alpha := \sum_{k = 1}^{s-1} a_i.
$$
\noindent There is an inclusion of $R$-modules
$$
F^{a_1 + c_1, \ldots, a_{s-1} + c_{s - 1}, e +  c_s} \subset \mathcal{I} \subset F^{a_1, \ldots, a_{s-1}, e} \subset \tilde{R}
$$
\noindent for any $[\mathcal{I}] \in \Hilb^{d, \underline{\bm{a}}', e}$ by Proposition \ref{prop:inclusions}. Moreover, we have inequalities 
$$
d - \alpha + \delta - C \le e \le d - \alpha + \delta
$$
\noindent by Proposition \ref{prop:bounds}. Together, these produce the inclusions
$$
\mf{n}_1 := F^{a_1 + c_1, \ldots, a_{s-1} +  c_{s-1}, d - \alpha + \delta + c_s} \subset \\
\mathcal{I}  \subset F^{a_1, \ldots, a_{s-1}, d-\alpha + \delta - C}
$$

Define 
$$
\epsilon_d := (x_1^{-a_1}, \ldots, x_{s-1}^{-a_{s-1}}, x_s^{-d + \alpha -\delta + C}) \in \mathrm{Frac}(\tilde{R}) = \prod_{i = 1}^s k(\!(x_i)\!)
$$
\noindent where $\mathrm{Frac}(\tilde{R})$ is the total ring of fractions of $\tilde{R}$. Multiplication by $\epsilon_d$ is $R$-module automorphism of $\mathrm{Frac}(\tilde{R})$ and leads to an inclusion  
$$
\mf{n}_0 := F^{c_1, \ldots, c_{s - 1}, C + c_s } \subset \epsilon_d \mathcal{I} \subset \tilde{R} 
$$
\noindent of $R$-modules. Note that $\mf{n}_0$ depends only on which entry of the branch-length vector is varying and not on the specific values of $d,\underline{\bm{a}}'$, or $e$. 

We now compute the dimension 
$$
\dim_k(\epsilon_d \mathcal{I}/\mf{n}_0) = \dim_k(\mathcal{I}/\mf{n}_1).
$$
\noindent By additivity of dimension, the right hand side is equal to 
$$
\dim_k (\tilde{R}/\mf{n}_1) - \dim_k(\tilde{R}/R) - \dim_k(R/\mathcal I) = C. 
$$
\noindent Note that this dimension is independent of $d, \underline{\bm{a}}'$, and $e$. As a consequence we have a well defined map 
$$
\phi_{d,e} : \Hilb^{d, \underline{\bm{a}}', e}(X,0) \to \mathrm{Gr}(C, \tilde{R}/\mf{n}_0)
$$
\noindent given by 
$$
\mathcal{I} \mapsto \epsilon_d \mathcal{I}/\mf{n}_0. 
$$
\noindent This is an embedding into the closed subvariety $\mc{M} \subset \mathrm{Gr}(C,\tilde{R}/\mf{n}_0)$ consisting of those subspaces of $\tilde{R}/\mf{n}_0$ that are $R$-submodules. To see this is a closed subvariety, apply the following observation to the generators of $R$: if $V$ is a vector space and $f : V \to V$ is a linear map, then the set of $f$-stable subspaces of $V$ is closed in the Grassmannian.

Suppose $e \geq c_s$ and let $[\epsilon_d \mathcal{I}/\mf{n}_0] \in \mathrm{im}(\phi_{d,e})$. Consider the $R$-submodule $\xi_s \mathcal{I} \subset \tilde{R}$ where 
$$
\xi_s = (\underbrace{1, \ldots, 1}_{s-1 \text{ times}}, x_s)
$$
\noindent Since $e \geq c_s$, the set $\xi_s \mathcal{I}$ is contained in $R$ and so defines an ideal. Multiplication by $\xi_s$ doesn't change the branch-length vector $\underline{\bm{a}}' = (a_1, \ldots, a_{s-1})$, but increases the length along the $s^{th}$ branch/ It follows that
\[
[\xi_s \mathcal{I}] \in \Hilb^{d+1,\underline{\bm{a}}', e+1}(X,0).
\] 
We have that $\epsilon_{d+1}\xi_s \mathcal{I}/\mf{n}_0 = \epsilon_d \mathcal{I}/\mf{n}_0$. It follows that $[\epsilon_d \mathcal{I}/\mf{n}_0] \in \mathrm{im}(\phi_{d+1,e+1})$ and $\mathrm{im}(\phi_{d,e})$ is contained in $\mathrm{im}(\phi_{d+1,e+1})$. 

On the other hand, suppose $[\epsilon_{d+1} \mathcal{J}/\mf{n}_0] \in \mathrm{im}(\phi_{d+1,e+1})$. By the same argument as above, we see that $\xi_s^{-1} \mathcal{J} \subset R$ is an ideal of length $d$ with branch-length vector $(a_1,\ldots, a_{s-1},e)$ so that $[\epsilon_{d+1} \mathcal{J}/\mf{n}_0] = [\epsilon_d \xi_s^{-1} \mathcal{J}/\mf{n}_0] \in \mathrm{im}(\phi_{d,e})$ and $\mathrm{im}(\phi_{d+1,e+1}) = \mathrm{im}(\phi_{d,e})$. The result follows since the maps $\phi_{d,e}$ are embeddings.  

\end{proof}

\begin{corollary}\label{cor:dim} Let $(X,0)$ be a reduced curve singularity. Then the dimension of $\Hilb^n(X,0)$ stabilizes. \end{corollary}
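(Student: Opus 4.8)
The plan is to read off the dimension of $\Hilb^n(X,0)$ from its stratification by branch-length vectors and then use the stabilization of Theorem~\ref{thm: stab} to compare consecutive degrees. Since $\Hilb^n(X,0) = \bigsqcup_{\underline{\bm{a}}} \Hilb^{n,\underline{\bm{a}}}(X,0)$ is a \emph{finite} disjoint union of locally closed strata (Proposition~\ref{strata}, with finiteness of the index set following from the bounds of Proposition~\ref{prop:bounds}), its dimension is the maximum of the dimensions of the nonempty strata. I would therefore reduce the corollary to showing that $\dim \Hilb^{n+1}(X,0) = \dim \Hilb^n(X,0)$ for all $n \gg 0$, which gives stabilization immediately.

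First I would record two preliminary observations. (i) Although Theorem~\ref{thm: stab} is stated for the last branch, by relabeling it holds for every branch $i$, and its proof in fact produces an \emph{isomorphism of varieties} $\Hilb^{d,\underline{\bm{a}}}(X,0) \cong \Hilb^{d+1,\underline{\bm{a}}+\underline{\bm{e}}_i}(X,0)$ whenever $a_i \geq c_i$, where $\underline{\bm{e}}_i$ is the $i$th standard basis vector; indeed both strata embed via $\phi$ with the same image inside the locus $\mathcal{M} \subset \mathrm{Gr}(C,\tilde{R}/\mf{n}_0)$, so in particular this isomorphism preserves dimension. (ii) Using Proposition~\ref{prop:bounds} in the form $\sum_i a_i \geq n - C + \delta$, every nonempty stratum of $\Hilb^n(X,0)$ has $\sum_i a_i$ large once $n$ is large, so by pigeonhole some branch satisfies $a_i \geq c_i$; similarly, for $\Hilb^{n+1}(X,0)$ with $n$ large, some branch satisfies $a_i \geq c_i + 1$.

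With these in hand the two inequalities follow. For $\dim \Hilb^{n+1} \geq \dim \Hilb^n$: choose a top-dimensional stratum $\Hilb^{n,\underline{\bm{a}}}$ and a branch $i$ with $a_i \geq c_i$; the isomorphism of (i) identifies it with the equidimensional stratum $\Hilb^{n+1,\underline{\bm{a}}+\underline{\bm{e}}_i}$ of $\Hilb^{n+1}(X,0)$. For $\dim \Hilb^n \geq \dim \Hilb^{n+1}$: choose a top-dimensional stratum $\Hilb^{n+1,\underline{\bm{b}}}$ and a branch $i$ with $b_i \geq c_i + 1$, so that $b_i - 1 \geq c_i$; applying the inverse isomorphism identifies it with the equidimensional stratum $\Hilb^{n,\underline{\bm{b}}-\underline{\bm{e}}_i}$ of $\Hilb^n(X,0)$. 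Combining gives $\dim \Hilb^{n+1}(X,0) = \dim \Hilb^n(X,0)$ for $n \gg 0$, hence the corollary. The main point to get right is the bookkeeping in the backward direction: I must guarantee a branch \emph{strictly} above its conductor so that Theorem~\ref{thm: stab} still applies after decrementing, which is precisely why the pigeonhole estimate is run on the total $\sum_i a_i$ rather than on an individual entry.
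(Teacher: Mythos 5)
Your argument is correct and supplies exactly the deduction the paper leaves implicit: Corollary~\ref{cor:dim} is stated without proof as a consequence of Theorem~\ref{thm: stab}, and your route---the finite stratification by branch-length vectors, the pigeonhole bound from Proposition~\ref{prop:bounds} guaranteeing an entry at (respectively, strictly above) the conductor, and the observation that the proof of Theorem~\ref{thm: stab} produces an honest isomorphism of strata rather than merely an equality of classes---is evidently the intended one. You are also right to insist on the isomorphism of varieties rather than the Grothendieck-ring identity, since the latter alone would not immediately yield equality of dimensions.
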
 

\subsection{Conclusion of the proof of the Main Theorem}\label{conclusion} Now we are equipped to conclude the proof of the Main Theorem. As observed in Section \ref{sec:local}, this reduces to the proving Theorem \ref{thm:1}. 

\begin{proof}[Proof of Theorem \ref{thm:1}]
We compute $Z_{0 \subset X}^{\Hilb}(t)$ by stratifying $\Hilb^d(X,0)$ into branch-length strata $\Hilb^{d, \bf{\underline{a}}}(X,0)$. By Proposition \ref{prop:bounds}, for each $d$ there are only finitely many branch-length vectors $\bf{\underline{a}}$ for which $\Hilb^{d, \bf{\underline{a}}}(X,0)$ is non-empty. Thus we may compute as follows.
\begin{align*}
\sum_{d \geq 0} [\Hilb^d(X,0)]t^d = &\sum _{d \geq 0} \sum_{a_1,\ldots,a_s} [\Hilb^{d,a_1,\ldots, a_s}(X,0)]t^d \\ 
= & \sum_{a_1, \ldots, a_s} \sum_{d \geq 0} [\Hilb^{d,a_1,\ldots, a_s}(X,0)]t^d 
\end{align*}

By Theorem \ref{thm: stab}, the Hilbert schemes stabilize under incrementing entries of the branch-length vector, once the lengths are beyond the conductor. Precisely, we have an equality
\[
[\Hilb^{d + k, a_1, \ldots, c_i + k, \ldots a_s}(X,0)] = [\Hilb^{d, a_1, \ldots, c_i, \ldots a_s}(X,0)]
\] 
for any $i$ and $k \geq 0$. Thus for fixed $a_1, \ldots, \hat{a}_i, \ldots, a_s$ we can sum over $a_i$ to get
\begin{align*}
\sum_{a_i\geq 0} \sum_{d \geq 0} [\Hilb^{d,a_1, \ldots, a_s}(X,0)]t^d =& \sum_{a_i = 0}^{c_i - 1} \sum_{d \geq 0} [\Hilb^{d,a_1, \ldots, a_s}(X,0)]t^d \\ +& \frac{1}{1 - t} \sum_{d \geq 0} [\Hilb^{d,a_1, \ldots, c_i, \ldots, a_s}(X,0)]t^d.
\end{align*}

By applying this to each branch and manipulating the summand, we calculate as follows.
\begin{align*}
\sum_{a_1, \ldots, a_s} \sum_{d \geq 0} [\Hilb^{d,a_1,\ldots, a_s}(X,0)]t^d = & \sum_{a_1 = 0}^{c_1 - 1} \ldots \sum_{a_s = 0}^{c_s - 1} \sum_{d \geq 0} [\Hilb^{d, a_1, \ldots, a_s}(X,0)]t^d \\
+ & \sum_{a_1 = 0}^{c_1 - 1} \ldots \sum_{a_{s-1} = 0}^{c_{s-1} - 1}\frac{1}{1 - t} \sum_{d \geq 0} [\Hilb^{d, a_1, \ldots, c_s}(X,0)]t^d \\
+ & \sum_{a_1 = 0}^{c_1 - 1} \ldots \sum_{a_{s-2} = 0}^{c_{s-2} - 1}\frac{1}{(1 - t)^2} \sum_{d \geq 0} [\Hilb^{d, a_1, \ldots, c_{s-1}, c_s}(X,0)]t^d \\
\vdots & \\
+ & \frac{1}{(1 - t)^s} \sum_{d \geq 0} [\Hilb^{d, c_1, \ldots, c_s}(X,0)]t^d
\end{align*}

Finally, by Proposition \ref{prop:bounds}, for each fixed branch-length vector $(a_1, \ldots, a_s)$, the value of $d$ is bounded above and below. Thus the sums over $d$ on the right hand side are all finite so we conclude that the left hand side $Z_{0 \subset X}^{\Hilb}(t)$ is a rational function with denominator $(1 - t)^s$. 

\end{proof}

\section{Extended example: the coordinate axes} 

The bounds in Proposition \ref{prop:bounds} and the stabilization in Theorem~\ref{thm: stab} yield an effective method for computing the Hilbert schemes of many curve singularities. We illustrate this in the following example. 

Let $(X_N,0)$ be the germ at the origin of the coordinate axes $V(\{x_ix_j = 0\}_{i \neq j}) \subset \mb{A}^N$. The normalization $\tilde{X}_N \to X_N$ consists of $N$ branches mapping isomorphically to the branches of $X_N$. On coordinate rings, there is an inclusion
$$
R = k\llbracket x_1, \ldots, x_N \rrbracket/(\{x_ix_j\}_{i \neq j}) \subset \prod_{i = 1}^N k\llbracket x_i \rrbracket = \tilde{R}.
$$
We have $s = N$, $\delta = N - 1$, $\underline{c} = (1,\ldots, 1)$ and $C = N$.  

Let $\underline{a} = (a_1, \ldots, a_s)$ be a branch-length vector and $[\mathcal I] \in \Hilb^{d,\underline{a}}(X_N,0)$. By Proposition~\ref{prop:bounds}, 
$$
0 \le \sum_{i = 1}^s a_i - d \le N - 1. 
$$
Since $c_i = 1$ for all $i$, it follows by Theorem \ref{thm: stab} that the branch-length strata $\Hilb^{d,\underline{a}}(X_N,0)$ stabilize at $a_i = 1$. In this case the above bounds become
$$
0 \le N - d \le N - 1
$$
or $1 \le d \le N$. 

For each $d$, $\Hilb^{d,1, \ldots, 1}(X_N,0)$ embeds into $Gr(N - d + 1, V)$ where 
\[
V = \langle x_1, \ldots, x_N \rangle = F^{1,\ldots, 1}/F^{2, \ldots, 2}.
\] 
Explicitly, the embedding $\phi: \Hilb^{d, 1, \ldots, 1}(X_N,0) \subset Gr(N - d + 1,V)$ is given by 
$$
[\mathcal I] \mapsto [\mathcal I/F^{2, \ldots, 2}] \in Gr(N - d + 1, V)
$$
where we have the containments $F^{2, \ldots, 2} \subset \mathcal I \subset F^{1,\ldots, 1}$ by Proposition \ref{prop:inclusions}. 

As $\underline{c} = (1,\ldots, 1)$, multiplication by $x_i$ acts by $0$ on $V$ so every subspace of $V$ is an $R$-module. In particular, $\mathrm{im}(\phi)$ consists precisely of the locus of subspaces $W \subset V$ which have branch-length vector $(1,\ldots, 1)$. Equivalently, $W \subset V$ must not lie inside any coordinate hyperplane of $V$ under the given coordinates. Denoting the open subset of the Grassmannian parametrizing such $W$ by $Gr(N - d + 1,V)^0$, we conclude that
$$
[\Hilb^{d,1, \ldots, 1}(X_N,0)] = [Gr(N - d + 1,V)^0].
$$

Putting this all together, we obtain
\begin{proposition}\label{prop:axes} Let $(X_N,0)$ be the germ at the origin of the coordinate axes in $\mathbb{A}^N$. Then the Hilbert zeta function is the rational function
$$
Z_{0 \subset X_N}^{\Hilb}(t) =  1 + \frac{1}{(1 - t)^N}\sum_{d = 1}^N \left[Gr(N - d + 1,V)^0\right] t^d.
$$
In particular, $[\Hilb^d(X_N,0)]$ is a polynomial in $\mb{L}$ for all $d$ and $N$. 
\end{proposition}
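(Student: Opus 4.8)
The plan is to assemble $Z_{0\subset X_N}^{\Hilb}(t)$ from the branch-length strata, feeding in the three facts already established for $(X_N,0)$: the numerical bounds $0\le \sum_i a_i - d\le N-1$ of Proposition~\ref{prop:bounds}; the stabilization of Theorem~\ref{thm: stab}, which, since $\underline{c}=(1,\ldots,1)$, identifies the class of every stratum with all $a_i\ge 1$ with that of the stabilized stratum $\underline{a}=(1,\ldots,1)$; and the identification $[\Hilb^{d,1,\ldots,1}(X_N,0)]=[Gr(N-d+1,V)^0]$ for $1\le d\le N$ already derived above.

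First I would observe that for $d\ge 1$ the only nonempty strata are those with every $a_i\ge 1$. Indeed, a nonempty subscheme supported at $0$ contains the reduced point, whose pullback along each $\varphi_i$ is the reduced point of $B_i$; hence $a_i=\mathrm{length}(\varphi_i^*Z_{\mathcal I})\ge 1$ for all $i$. Equivalently, a proper ideal $\mathcal I\subset R$ lies in the maximal ideal, so $v_i(f)\ge 1$ for every $f\in\mathcal I$, forcing $a_i\ge 1$. Consequently every stratum with some $a_i=0$ is empty apart from the empty subscheme ($d=0$, $\underline{a}=0$). Running the generating-function computation from the proof of Theorem~\ref{thm:1}, specialized to $c_i=1$, the per-branch splitting $\sum_{a_i\ge 0}=\sum_{a_i=0}^{c_i-1}+\frac{1}{1-t}(\text{term at }a_i=c_i)$ reduces to an ``$a_i=0$'' piece plus $\tfrac{1}{1-t}$ times the capped term at $a_i=1$. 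By the emptiness just noted, all the $a_i=0$ pieces vanish for $d\ge 1$, so after summing over all $N$ branches only the fully capped contribution survives:
\[
Z_{0\subset X_N}^{\Hilb}(t) = 1 + \frac{1}{(1-t)^N}\sum_{d\ge 1}[\Hilb^{d,1,\ldots,1}(X_N,0)]\,t^d.
\]
Substituting the Grassmannian identification and the range $1\le d\le N$ coming from the bounds then yields the stated rational expression.

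For the polynomiality in $\mb{L}$, I would compute $[Gr(k,V)^0]$ with $k=N-d+1$ by inclusion--exclusion over the coordinate hyperplanes $H_i\subset V$. The locus $Gr(k,V)^0$ is the complement in $Gr(k,V)$ of the closed subvarieties $\{W\subseteq H_i\}$, and for $S\subseteq\{1,\ldots,N\}$ one has $\bigcap_{i\in S}H_i\cong k^{\,N-|S|}$, with $\{W\subseteq\bigcap_{i\in S}H_i\}=Gr(k,N-|S|)$. Inclusion--exclusion therefore expresses $[Gr(k,V)^0]=\sum_{S}(-1)^{|S|}\binom{N-|S|}{k}_{\mb{L}}$, a $\ZZ$-linear combination of Gaussian binomials and hence a polynomial in $\mb{L}$. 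Since $1/(1-t)^N=\sum_m\binom{N-1+m}{N-1}t^m$ has integer coefficients, extracting the coefficient of $t^d$ exhibits $[\Hilb^d(X_N,0)]$ as a $\ZZ$-linear combination of these polynomials, so it is a polynomial in $\mb{L}$.

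The argument is largely bookkeeping once the three cited ingredients are in place; the one step that needs a genuine (if brief) argument is the vanishing of the $a_i=0$ strata, since this is exactly what collapses the many-term expansion in the proof of Theorem~\ref{thm:1} down to the single surviving stratum $\underline{a}=(1,\ldots,1)$ and produces the clean $(1-t)^{-N}$ prefactor.
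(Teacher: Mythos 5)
Your proof is correct and follows essentially the same route as the paper: specialize the generating-function computation of Section~\ref{conclusion} to $c_i=1$, substitute the Grassmannian description of the stabilized stratum $[\Hilb^{d,1,\ldots,1}(X_N,0)]=[Gr(N-d+1,V)^0]$, and use inclusion--exclusion over the coordinate hyperplanes for polynomiality in $\mb{L}$. The one step you make explicit that the paper leaves implicit is the (correct) observation that strata with some $a_i=0$ are empty for $d\ge 1$, which is exactly what collapses the general expansion to the single $(1-t)^{-N}$ term.
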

\begin{proof} This follows from the description of $[\Hilb^{d,1, \ldots, 1}(X_N,0)]$, Theorem \ref{thm: stab} and the computation in Section \ref{conclusion}. Finally, note that $Gr(k,V)^0$ is the complement of the union of $Gr(k,V_i) \subset Gr(k,V)$ where $V_i \subset V$ are the coordinate hyperplanes. By inclusion-exclusion it follows that $[Gr(k,V)^0]$ is a polynomial in $\mb{L}$. 
\end{proof} 

\begin{remark} Zheng \cite[Section 2.3]{xudong} has also performed the same computation for the coordinate axes using different methods. \end{remark} 

\subsection{The axes in three space}

When $N = 3$ we get a particularly pleasant picture. In this case, $V$ is $3$-dimensional and we may compute
\begin{align*}
&\Hilb^{1,1,1,1}(X_3,0) = Gr(3,V)^0 = pt \\
&\Hilb^{2,1,1,1}(X_3,0) = Gr(2,V)^0 = \mb{P}^2 \setminus \{P_1,P_2,P_3\} \\
&\Hilb^{3,1,1,1}(X_3,0) = Gr(1,V)^0 = \mb{P}^2 \setminus (L_1 \cup L_2 \cup L_3)
\end{align*}
where $P_i$ are the distinguished points corresponding to the coordinate hyperplanes in $V$ and $L_i$ are the distinguished lines corresponding to the space of lines in the coordinate hyperplanes of $V$. 

\begin{figure}[h]
\begin{tabular}{lcr}
\begin{tikzpicture}[scale=2]
\coordinate (A) at (.85,-.5);
\coordinate (B) at (-.85,-.5);
\coordinate (C) at (0,1);

\draw[color=black] (A) -- (B) -- (C) -- (A);
\fill[white!70!blue, path fading=east] (A) -- (B) -- (C) -- (A);

\draw[color=black,fill=black] (A) circle (.3mm); 
\draw[color=black,fill=black] (B) circle (.3mm);
\draw[color=black,fill=black] (C) circle (.3mm); 

\end{tikzpicture}& &

\begin{tikzpicture}[scale=2]
\coordinate (A) at (.85,-.5);
\coordinate (B) at (-.85,-.5);
\coordinate (C) at (0,1);

\draw[color=red,dashed] (A) -- (B) -- (C) -- (A);
\fill[white!70!blue, path fading=east] (A) -- (B) -- (C) -- (A);

\end{tikzpicture}
\end{tabular}
\caption{On the left, the stratum $\Hilb^{2,1,1,1}(X_3,0)$ whose closure contains three zero dimensional strata corresponding to twisting $\Hilb^{1,1,1,1}(X_3,0)$ along each of the three branches. On the right, the stratum $\Hilb^{3,1,1,1}(X_3,0)$ whose closure contains coordinate lines inside of $\Hilb^{3,2,1,1}(X_3,0)$ and its permutations.}
\end{figure}
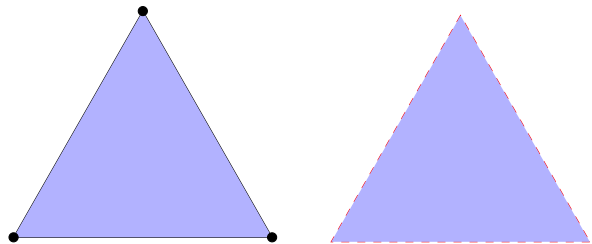

The closure of $\Hilb^{2,1,1,1}(X_3,0)$ contains three strata for branch-length $(2,1,1)$ and its permutations. These are simply the points $P_i$ with the image of $\Hilb^{1,1,1,1}(X_3,0)$ under the identification from Theorem \ref{thm: stab}. Concretely, $P_1$ corresponds to the ideal $(x_1^2, x_2,x_3)$ and similarly for $P_2$ and $P_3$. These are all the possible strata for $d = 2$. 

For $d = 3$ we have the new stratum $\Hilb^{3,1,1,1}(X_3,0)$ as well as the strata coming from $\Hilb^2(X_3,0)$ by twisting along the various branches as in Theorem \ref{thm: stab}. This corresponds to gluing in copies of $\mb{P}^2$ along each of the lines $L_i$ which gives $\Hilb^3(X_3,0)$ as a union of $4$ copies of $\mb{P}^2$ glued along coordinate lines. 

\begin{figure}[h]
\begin{tikzpicture}[scale=2]
\coordinate (A) at (.85,-.5);
\coordinate (B) at (-.85,-.5);
\coordinate (C) at (0,1);
\coordinate (D) at (0,-0.5);
\coordinate (E) at (0.425,0.25);
\coordinate (F) at (-0.425,0.25);

\draw[color=black] (A) -- (B) -- (C) -- (A);
\draw[color=red] (D)--(E)--(F)--(D);
\fill[white!70!blue, path fading=east] (A) -- (D) -- (E) -- (A);
\fill[white!70!blue, path fading=east] (D) -- (E) -- (F) -- (D);
\fill[white!70!blue, path fading=east] (B) -- (D) -- (F) -- (B);
\fill[white!70!blue, path fading=east] (F) -- (E) -- (C) -- (F);

\draw[color=black,fill=black] (A) circle (.3mm); 
\draw[color=black,fill=black] (B) circle (.3mm);
\draw[color=black,fill=black] (C) circle (.3mm); 
\draw[color=black,fill=red] (D) circle (.3mm); 
\draw[color=black,fill=red] (E) circle (.3mm);
\draw[color=black,fill=red] (F) circle (.3mm); 
\end{tikzpicture}
\caption{The reduced Hilbert scheme $\Hilb^3(X_3,0)$ consists of $4$ copies of $\mb{P}^2$ glued along coordinate lines as depicted. The solid shaded strata are $\Hilb^{3,2,1,1}(X_3,0)$ and its permutations. The center stratum is $\Hilb^{3,1,1,1}(X_3,0)$ and the vertices correspond to strata obtained by twisting $\Hilb^{1,1,1,1}(X_3,0)$.}
\end{figure}
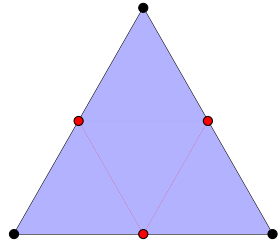

For larger $d$, the strata stabilize and are all obtained from twisting the strata for $d - 1$ along each branch resulting in an arrangement of $\mb{P}^2$'s glued along coordinate lines with dual complex a regular subdivision of the triangle.

\bibliographystyle{siam} 
\bibliography{zeta} 

\end{document}